\newtheorem{thm}{Theorem}[section]
\newtheorem{cor}[thm]{Corollary}
\newtheorem{lem}[thm]{Lemma}
\theoremstyle{definition}
\newtheorem{exam}[thm]{Example}
\theoremstyle{remark}
\numberwithin{equation}{section}
\begin{document}

\title{Duality Between Uniform Spaces and Boolean Algebras}
\author{Joseph Van Name}
\begin{abstract}
In this note we shall generalize the Stone duality between compact totally
disconnected spaces and Boolean algebras to a duality between all
complete non-Archimedean uniform spaces and Boolean algebras.
\end{abstract}
\maketitle
\section{Boolean algebras}

In this note, if $f:X\rightarrow Y$ is a function and $A\subseteq X,B\subseteq Y$,
then we shall let $f''(A)$ be the image of $A$ and $f_{-1}(B)$ denote
the inverse image of $B$.

Let $P$ be a poset. Then $x,y\in P$ are said to be incompatible if
there does not exist an $r\in P$ with $r\leq x,r\leq y$. A subset
$A\subseteq P$ is said to be cellular if every pair of elements in
$A$ is incompatible.

If $(A,\wedge,0)$ is a semilattice, then we shall say $x,y\in A\setminus\{0\}$
are incompatible if $x\wedge y=0$, and we shall say $A'\subseteq A\setminus\{0\}$
is cellular if $x\wedge y=0$ for $x,y\in A',x\neq y$.

\begin{thm}
Let $P$ be a poset. Then every cellular family is contained in a
maximal cellular family (ordered under $\subseteq$).
\end{thm}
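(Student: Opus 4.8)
The plan is to apply Zorn's lemma to the collection of all cellular families that contain the given one, ordered by inclusion. Fix a cellular family $C_0\subseteq P$, and let $\mathcal{P}$ denote the set of all cellular families $C$ with $C_0\subseteq C$, partially ordered by $\subseteq$. Since $C_0$ is itself cellular we have $C_0\in\mathcal{P}$, so $\mathcal{P}$ is nonempty.

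The main step is to check that every chain in $\mathcal{P}$ has an upper bound in $\mathcal{P}$. Let $\mathcal{C}\subseteq\mathcal{P}$ be a chain. If $\mathcal{C}=\emptyset$, then $C_0$ serves as an upper bound. Otherwise set $U=\bigcup_{C\in\mathcal{C}}C$; I claim $U$ is cellular. Given distinct $x,y\in U$, there are members $C_x,C_y\in\mathcal{C}$ with $x\in C_x$ and $y\in C_y$, and since $\mathcal{C}$ is linearly ordered by inclusion we may assume $C_x\subseteq C_y$, so that $x$ and $y$ both lie in the single cellular family $C_y$; hence they are incompatible. As this holds for every pair, $U$ is cellular, and clearly $U\supseteq C_0$, so $U\in\mathcal{P}$ is an upper bound for $\mathcal{C}$.

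By Zorn's lemma $\mathcal{P}$ has a maximal element $C^{*}$. It remains to observe that $C^{*}$ is maximal not merely within $\mathcal{P}$ but among all cellular families: if a cellular family $D$ satisfied $C^{*}\subsetneq D$, then $C_0\subseteq C^{*}\subseteq D$ would force $D\in\mathcal{P}$, contradicting the maximality of $C^{*}$ in $\mathcal{P}$. Thus $C^{*}$ is a maximal cellular family containing $C_0$.

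I do not expect a genuine obstacle here; the argument is a textbook Zorn's lemma application, and the only point that truly needs attention is the verification that the union of a chain of cellular families is again cellular. What makes that step clean is that incompatibility is a property intrinsic to each pair $\{x,y\}$ — it refers only to the existence of a common lower bound in $P$ and not to the ambient family — so passing to a union can never destroy the pairwise condition.
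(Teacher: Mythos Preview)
Your proof is correct and follows essentially the same approach as the paper: both apply Zorn's lemma, with the key observation that the union of a chain of cellular families is again cellular. You simply supply the details the paper leaves implicit (the pairwise incompatibility check for the union and the passage from maximality in $\mathcal{P}$ to maximality among all cellular families).
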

\begin{proof}
This is a simple application of Zorn's lemma. If $(R_{b})_{b\in B}$
is a chain of cellular families, then $\cup_{b\in B}R_{b}$ is cellular.
\end{proof}
Let $P$ be a poset(semilattice), the write $c(P)$ for the collection of cellular families
on $P$. If $A,B\in c(P)$, then write $A\preceq B$ if for each $a\in A$, there
is a $b\in B$ with $a\leq b$. Since $B$ is cellular, the $b\in B$ with
$a\leq b$ is unique, so let's write $\phi_{A,B}:A\rightarrow B$
for the unique function with $a\leq	\phi_{A,B}(a)$.

\begin{thm}
$c(P)$ is a poset under the ordering $\preceq$, and $c(P)$ is an inverse system
with the mappings $\phi_{A,B}$.
\end{thm}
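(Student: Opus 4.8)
The plan is to establish two separate things. First, I would show that $\preceq$ is genuinely a partial order on $c(P)$, i.e.\ that it is reflexive, transitive, and antisymmetric. Second, I would verify that the family of maps $\phi_{A,B}$ satisfies the two coherence laws $\phi_{A,A}=\mathrm{id}_{A}$ and $\phi_{B,C}\circ\phi_{A,B}=\phi_{A,C}$ (for $A\preceq B\preceq C$) that constitute the structure of an inverse system over the index poset $c(P)$. Throughout I would treat the poset and semilattice cases uniformly by phrasing everything in terms of compatibility: two elements are compatible exactly when they admit a common lower bound (equivalently, in the semilattice case, when their meet is nonzero), and cellularity of a family means its distinct members are pairwise incompatible.

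For the order axioms, reflexivity is immediate, since $a\leq a$ lets me take $b=a$, and transitivity is a routine chaining argument: given $A\preceq B\preceq C$ and $a\in A$, I pick $b\in B$ with $a\leq b$ and then $c\in C$ with $b\leq c$, and transitivity of $\leq$ in $P$ yields $a\leq c$. The one delicate point is antisymmetry, and it is the only place where cellularity is truly used. Assuming $A\preceq B$ and $B\preceq A$, I fix $a\in A$, obtain $b\in B$ with $a\leq b$, then $a'\in A$ with $b\leq a'$, so that $a\leq a'$. Now $a$ is a common lower bound of $a$ and $a'$ (and is nonzero in the semilattice reading), so $a$ and $a'$ are compatible members of the cellular family $A$; cellularity forces $a=a'$. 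Then $a\leq b\leq a$, and antisymmetry of $\leq$ in $P$ gives $a=b\in B$. Hence $A\subseteq B$, and by symmetry $B\subseteq A$, so $A=B$.

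For the inverse-system structure I would first reconfirm that each $\phi_{A,B}$ is well defined, which is the uniqueness already noted in the text: if $a\leq b_{1}$ and $a\leq b_{2}$ with $b_{1},b_{2}\in B$, then $b_{1}$ and $b_{2}$ share the lower bound $a$ (nonzero in the semilattice case), hence are compatible, and cellularity of $B$ gives $b_{1}=b_{2}$. The identity law is then immediate, since $a\leq a$ together with this uniqueness forces $\phi_{A,A}(a)=a$. The composition law is again a chaining argument: for $A\preceq B\preceq C$ and $a\in A$, setting $b=\phi_{A,B}(a)$ and $c=\phi_{B,C}(b)$ gives $a\leq b\leq c$, whence $a\leq c$, and uniqueness of the element of $C$ above $a$ identifies $c$ with $\phi_{A,C}(a)$.

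The genuinely substantive step is the antisymmetry argument; everything else is bookkeeping with the order relation. The subtlety there is that mutual refinement ($A\preceq B$ and $B\preceq A$) gives, a priori, only that every element of one family is sandwiched between two elements of the other, and it is precisely the pairwise-incompatibility hypothesis that collapses such a sandwich to an equality, upgrading mutual refinement to set-theoretic equality $A=B$. I expect this to be the only place where the proof could go wrong if cellularity were dropped.
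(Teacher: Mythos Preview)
Your proposal is correct and follows essentially the same route as the paper: reflexivity and the identity law are immediate from $a\leq a$; transitivity and the composition law both come from chaining $a\leq\phi_{A,B}(a)\leq\phi_{B,C}\phi_{A,B}(a)$ and invoking uniqueness; and antisymmetry is obtained by sandwiching $a\leq b\leq a'$ with $a,a'\in A$ and using cellularity to collapse $a=a'$. The paper phrases that last step as ``cellular families are antichains'' rather than in terms of compatibility, but the content is identical.
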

\begin{proof}
Clearly $A\preceq A$, and for each $a\in A:\phi_{A,A}(a)=a$ since $a\leq a$.
If $A\preceq B,B\preceq A$, then for each $a\in A$,
we have $a\leq\phi_{A,B}(a)\leq\phi_{B,A}\phi_{A,B}(a)$, so since $A$ is
cellular, we have $A$ be an antichain, so $a=\phi_{B,A}\phi_{A,B}(a)$, so
$a=\phi_{A,B}(a)$, so $A\subseteq B$. Similarly, we have $B\subseteq A$,
so $A=B$. Now if $A\preceq B,B\preceq C$, then $a\leq\phi_{A,B}(a)\leq\phi_{B,C}\phi_{A,B}(a)$,
so $A\preceq C$, and $\phi_{A,C}=\phi_{B,C}\phi_{A,B}$.
\end{proof}

Let $B$ be a Boolean algebra. Then a partition $P$ of $B$ is a subset
of $B\setminus\{0\}$ where $\vee P=1$ and where $x\wedge y=0$ for $x\neq y$.
The partitions of a Boolean algebra are precisely the maximal elements of
$c(B)$ with the inclusion ordering $\subseteq$.
We shall write $\mathbb{P}(B)$ for the collection of all partitions of
a Boolean algebra $B$.

\begin{lem}
For Boolean algebras $B$ the collection of partitions of $B$ form a lower
semilattice where if $P,Q\in\mathbb{P}(B)$, then $P\wedge Q=\{p\wedge q|p\in P,q\in Q,p\wedge q\neq 0\}$.
\end{lem}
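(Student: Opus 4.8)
The plan is to verify two things: that the set $P \wedge Q = \{p \wedge q \mid p \in P, q \in Q, p \wedge q \neq 0\}$ is itself a partition, and that it is the greatest lower bound of $P$ and $Q$ under $\preceq$. Since $\mathbb{P}(B)$ sits inside $c(B)$, which is already known to be a poset, producing binary meets is all that is needed to make $\mathbb{P}(B)$ a lower semilattice.

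First I would check that $P \wedge Q$ is cellular. Given two distinct elements $p \wedge q$ and $p' \wedge q'$ of $P \wedge Q$, their meet is $(p \wedge p') \wedge (q \wedge q')$; distinctness forces $p \neq p'$ or $q \neq q'$, and since $P$ and $Q$ are each cellular this gives $p \wedge p' = 0$ or $q \wedge q' = 0$, hence the meet is $0$. This step is routine.

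The main obstacle is showing $\vee(P \wedge Q) = 1$ without assuming $B$ is complete or satisfies any infinite distributive law; the only data available are $\vee P = 1$ and $\vee Q = 1$. I would argue directly that $1$ is the least upper bound. Let $u$ be any upper bound of $P \wedge Q$ and write $u^{c}$ for its complement. Then $p \wedge q \leq u$ for all $p \in P, q \in Q$ (trivially so when $p \wedge q = 0$), whence $p \wedge (q \wedge u^{c}) = 0$ for every such pair. Fixing $q$ and setting $v = q \wedge u^{c}$, every $p \in P$ satisfies $p \leq v^{c}$, so $v^{c}$ is an upper bound of $P$; since $\vee P = 1$ this forces $v^{c} = 1$, i.e. $q \wedge u^{c} = 0$, which says $q \leq u$. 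As this holds for all $q \in Q$, the element $u$ is an upper bound of $Q$, so $\vee Q = 1$ gives $u = 1$. Hence $\vee(P \wedge Q) = 1$ and $P \wedge Q \in \mathbb{P}(B)$.

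Finally I would verify the universal property. Each $p \wedge q$ lies below $p \in P$ and below $q \in Q$, so $P \wedge Q \preceq P$ and $P \wedge Q \preceq Q$, making it a common lower bound. If $R \in \mathbb{P}(B)$ is any lower bound of $P$ and $Q$, then each $r \in R$ satisfies $r \leq p$ and $r \leq q$ for suitable $p \in P, q \in Q$, whence $r \leq p \wedge q$; since $r \neq 0$ the element $p \wedge q$ is nonzero and belongs to $P \wedge Q$, so $R \preceq P \wedge Q$. Thus $P \wedge Q$ is the greatest lower bound, completing the argument.
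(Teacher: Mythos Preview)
Your proof is correct. The paper actually states this lemma without proof, so there is no argument to compare against; your write-up supplies exactly what is needed. The one point requiring care is establishing $\vee(P\wedge Q)=1$ without appealing to any infinite distributive law, and your upper-bound argument via complements handles this cleanly.
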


\begin{thm}
Let $p,q\in\mathbb{P}(B)$ and $p\preceq q$. Then if $p=\{a_{i}|i\in I\},q
=\{b_{j}|j\in J\}$, then for each $j\in J$ we have $b_{j}=\vee\{a_{i}|a_{i}\leq b_{j}\}$
\end{thm}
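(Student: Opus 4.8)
The plan is to show directly that, for a fixed $j\in J$, the element $b_j$ is the least upper bound of the set $S_j:=\{a_i\mid a_i\leq b_j\}$; since in a Boolean algebra $\vee S_j$ denotes exactly this least upper bound, this is precisely what the claim asserts (and it simultaneously establishes that the join exists). That $b_j$ is \emph{an} upper bound of $S_j$ is immediate from the definition of $S_j$. The substance lies in verifying that it is the \emph{least} one, and the essential point is that we must not appeal to infinite distributivity, since $B$ need not be complete; instead I would extract everything from the fact that $\vee p=1$ is, by definition, the least upper bound of $p$.

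So let $u$ be an arbitrary upper bound of $S_j$ and put $e=b_j\wedge\neg u$. It suffices to prove $e=0$, for then $b_j\wedge\neg u=0$ gives $b_j\leq u$. First I would show that $e\wedge a_i=0$ for \emph{every} $i\in I$, not merely for the indices with $a_i\in S_j$. For $a_i\in S_j$ this is clear: $a_i\leq u$ forces $a_i\wedge\neg u=0$, whence $e\wedge a_i\leq\neg u\wedge a_i=0$. For $a_i\notin S_j$, the relation $p\preceq q$ supplies the unique $b_{j'}=\phi_{p,q}(a_i)$ with $a_i\leq b_{j'}$; necessarily $j'\neq j$, since $j'=j$ would place $a_i$ in $S_j$. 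Then cellularity of the partition $q$ gives $b_j\wedge b_{j'}=0$, so $e\wedge a_i\leq b_j\wedge b_{j'}=0$.

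With $e\wedge a_i=0$ for all $i$, each $a_i$ satisfies $a_i\leq\neg e$, so $\neg e$ is an upper bound of the whole partition $p$. Since $\vee p=1$ is the least upper bound of $p$, we conclude $1\leq\neg e$, i.e.\ $\neg e=1$ and hence $e=0$. This completes the verification that $b_j$ is the least upper bound of $S_j$, which is exactly the assertion $b_j=\vee\{a_i\mid a_i\leq b_j\}$.

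The step I expect to be the main obstacle—or at least the one demanding the most care—is the reduction in the second paragraph: showing that the ``defect'' $e=b_j\wedge\neg u$ is disjoint from \emph{every} $a_i$, including those lying outside $S_j$. This is precisely where the hypothesis $p\preceq q$ together with the cellularity of $q$ does its work, and it is what lets us sidestep the need for completeness of $B$ and replace infinite distributivity by a bare appeal to the least-upper-bound property of $\vee p$.
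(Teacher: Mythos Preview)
Your proof is correct and follows essentially the same approach as the paper's: both arguments form the ``defect'' element $b_j\wedge\neg u$ (in the paper, $b_j\wedge r'$ for a putative smaller upper bound $r$), show in two cases that it is disjoint from every $a_i$, and then invoke $\vee p=1$ to conclude the defect vanishes. The only difference is cosmetic---the paper phrases it as a proof by contradiction while you argue directly that $b_j$ is the least upper bound---but the substance is identical.
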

\begin{proof}
Let's assume that for some $j$ we do not have $b_{j}=\vee\{a_{i}|a_{i}\leq b_{j}\}$
, then there is an $r\in B$ with $r\geq a_{i}$ for $i\in I$ and $r<b_{j}$. We therefore have
$b_{j}\wedge r'\neq 0$. If $a_{i}\leq b_{j}$, then $a_{i}\leq r$, so 
$(b_{j}\wedge r')\wedge a_{i}\leq(b_{j}\wedge r')\wedge r=0$, and if
$a_{i}\not\leq b_{j}$, then $a_{i}\leq b_{j'}$ for some $j'\neq j$, so
$(b_{j}\wedge r')\wedge a_{i}\leq(b_{j}\wedge r')\wedge b_{j'}=0$, so
$(b_{j}\wedge r')\wedge a_{i}=0$ for each $i\in I$, so $\{a_{i}|i\in I\}$
is not a partition of $B$. This is a contradiction.
\end{proof}

A partition $p$ on a Boolean algebra $B$ is said to be subcomplete if whenever
$r\subseteq p$, $\vee r$ exists.

\begin{thm}
If a partition $p$ of a Boolean algebra $B$ is subcomplete and $p\preceq q$, then
$q$ is subcomplete as well.
\end{thm}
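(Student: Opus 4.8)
The plan is to show that every subset of $q$ has a join by transporting the problem across the refinement map $\phi_{p,q}\colon p\to q$ and invoking the subcompleteness of $p$. Fix an arbitrary $s\subseteq q$; the goal is to produce $\vee s$. I would set $r=\{a\in p\mid \phi_{p,q}(a)\in s\}$, the preimage of $s$ under $\phi_{p,q}$, so that $r\subseteq p$. Since $p$ is subcomplete, $\vee r$ exists. The claim is then that $\vee r=\vee s$, which finishes the proof.

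First I would check that $\vee r$ is an upper bound of $s$. Fix $b\in s$ and write $p=\{a_i\mid i\in I\}$. By the previous theorem we have $b=\vee\{a_i\mid a_i\leq b\}$. For each such $a_i\leq b$, the element $\phi_{p,q}(a_i)$ is the unique member of $q$ lying above $a_i$; since $b\in q$ and $a_i\leq b$, uniqueness forces $\phi_{p,q}(a_i)=b\in s$, so $a_i\in r$. Hence $\{a_i\mid a_i\leq b\}\subseteq r$, and therefore $b=\vee\{a_i\mid a_i\leq b\}\leq\vee r$. As $b\in s$ was arbitrary, $\vee r$ is an upper bound of $s$.

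Next I would verify that $\vee r$ is the \emph{least} upper bound of $s$. Let $u$ be any upper bound of $s$. For $a\in r$ we have $\phi_{p,q}(a)\in s$ and $a\leq\phi_{p,q}(a)\leq u$, so $u$ is an upper bound of $r$; since $\vee r$ is the least upper bound of $r$, we get $\vee r\leq u$. Thus $\vee r$ is the least upper bound of $s$, i.e. $\vee s=\vee r$ exists, and $q$ is subcomplete.

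The argument is essentially a routine transport of structure, so I do not expect a serious obstacle; the one point demanding care is the upper-bound step, where the earlier theorem $b=\vee\{a_i\mid a_i\leq b\}$ together with the uniqueness of the refinement map $\phi_{p,q}$ is exactly what guarantees that the fibre of $\phi_{p,q}$ over $b$ lies inside $r$ and recovers $b$ as a join. Without that theorem one could not conclude that the pulled-back family $r$ is large enough to force $\vee r$ above every element of $s$.
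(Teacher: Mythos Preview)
Your proof is correct and follows essentially the same approach as the paper: your set $r=\phi_{p,q}^{-1}(s)$ is precisely the paper's $\bigcup_{b\in s}P_b$ (where $P_b=\{a\in p\mid a\leq b\}$), and your two-step verification that $\vee r=\vee s$ makes explicit the associativity-of-joins step that the paper compresses into the single line $\vee\bigl(\bigcup_{b\in Q}P_b\bigr)=\vee_{b\in Q}\vee P_b=\vee Q$.
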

\begin{proof}
Since $p\preceq q$ for each $a\in q$ there is a $P_{a}\subseteq p$ with
$a=\vee P_{a}$. If $Q\subseteq q$, then
$\vee(\cup_{a\in Q}P_{a})=\vee_{a\in Q}\vee P_{a}=\vee Q$, so $q$ is subcomplete as well.
\end{proof}

\begin{thm}
Let $A$ be a subcomplete partition of a Boolean algebra $B$. Then the map
$\phi:P(A)\rightarrow B$ given by $\phi(R)=\vee R$ is an injective
Boolean algebra homomorphism with $\phi(\cup_{i\in I}R_{i})=\vee_{i\in I}\phi(R_{i})$
i.e. $\phi$ preserves all least upper bounds.
\end{thm}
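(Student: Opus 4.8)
The plan is to verify in turn that $\phi$ respects $0$ and $1$, preserves arbitrary joins, preserves complements, preserves meets, and is injective, where $P(A)$ carries the usual set-theoretic Boolean structure (union as join, intersection as meet, relative complement $A\setminus R$, with $\emptyset$ and $A$ as the bounds). Note first that every $\vee R$ is defined because $A$ is subcomplete. I would dispatch the join-preservation clause first, as it is the most general and relies only on abstract properties of suprema. If $(R_i)_{i\in I}$ is any family of subsets of $A$, then $\vee(\cup_{i\in I} R_i)$ exists by subcompleteness and is in fact the least upper bound of $\{\vee R_i : i\in I\}$: it dominates each $\vee R_i$ since $R_i\subseteq\cup_{i\in I} R_i$, and any upper bound $z$ of all the $\vee R_i$ dominates every element of every $R_i$, hence dominates $\cup_{i\in I} R_i$, hence dominates $\vee(\cup_{i\in I} R_i)$. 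Thus $\vee_{i\in I}\phi(R_i)$ exists and equals $\phi(\cup_{i\in I} R_i)$. Taking $I=\emptyset$ gives $\phi(\emptyset)=0$, while $\phi(A)=\vee A=1$ because $A$ is a partition, and the two-element case records $\phi(R\cup S)=\phi(R)\vee\phi(S)$.

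The one genuinely new ingredient, and the step I expect to be the crux, is preservation of meet and complement, where one is tempted to invoke an infinite distributive law that need not hold in an arbitrary Boolean algebra. I would sidestep this by proving a disjointness lemma instead: if $R,S\subseteq A$ are disjoint, then $\phi(R)\wedge\phi(S)=0$. The key observation is that for a fixed $a\in R$ and any $b\in S$ we have $a\ne b$, so $a\wedge b=0$ by cellularity of the partition, i.e. $b\le a'$; thus $a'$ is an upper bound of $S$, so $\vee S\le a'$, giving $a\wedge\phi(S)=0$ and hence $a\le\phi(S)'$. Therefore $\phi(S)'$ is an upper bound of $R$, so $\phi(R)=\vee R\le\phi(S)'$, and consequently $\phi(R)\wedge\phi(S)\le\phi(S)'\wedge\phi(S)=0$. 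This uses only the defining property of least upper bounds together with the Boolean identity $x\wedge y=0\iff y\le x'$, with no appeal to infinite distributivity.

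With this lemma in hand the remaining clauses fall out formally. Taking $S=A\setminus R$ gives $\phi(R)\wedge\phi(A\setminus R)=0$, while join-preservation gives $\phi(R)\vee\phi(A\setminus R)=\phi(A)=1$, so by uniqueness of complements in $B$ we get $\phi(A\setminus R)=\phi(R)'$, i.e. $\phi$ preserves complements. For meets I would apply De Morgan inside $P(A)$: since $A\setminus(R\cap S)=(A\setminus R)\cup(A\setminus S)$, applying $\phi$ and using the already-established preservation of complement and binary join yields $\phi(R\cap S)'=\phi(R)'\vee\phi(S)'=(\phi(R)\wedge\phi(S))'$, whence $\phi(R\cap S)=\phi(R)\wedge\phi(S)$. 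Finally, since $\phi$ is now a homomorphism, injectivity reduces to triviality of its kernel: if $R\ne\emptyset$, pick $a\in R$, and then $0\ne a\le\vee R=\phi(R)$, so $\phi(R)=0$ forces $R=\emptyset$; a Boolean homomorphism with trivial kernel is injective.
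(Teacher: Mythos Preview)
Your proof is correct and follows the same overall architecture as the paper's: first arbitrary join preservation, then complement preservation, then injectivity via trivial kernel. The one substantive difference is exactly the point you flagged. To obtain $\phi(R)\wedge\phi(A\setminus R)=0$, the paper simply writes $(\vee R)\wedge(\vee R^{c})=\vee_{a\in R,\,b\in R^{c}}(a\wedge b)=0$, which on its face invokes the infinite distributive law $(\vee_{i}a_{i})\wedge(\vee_{j}b_{j})=\vee_{i,j}(a_{i}\wedge b_{j})$; that identity need not hold in an arbitrary Boolean algebra even when all the suprema involved exist. Your two-pass argument---fix $a\in R$, push the supremum over $S$ under $a'$ using $a\wedge b=0\Leftrightarrow b\le a'$, then take the supremum over $R$---uses only the universal property of least upper bounds, so it is strictly more careful at this step while reaching the same conclusion. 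The paper also leaves meet preservation implicit (it is automatic once joins and complements are preserved), whereas you spell it out via De~Morgan; that difference is purely cosmetic.
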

\begin{proof}
Let $R_{i}\subseteq A$ for $i\in I$. Then
$\phi(\cup_{i\in I}R_{i})=\vee\cup_{i\in I}R_{i}=\vee_{i\in I}\vee R_{i}=\vee_{i\in I}\phi(R_{i})$.
Furthermore, $\phi(R)\vee\phi(R^{c})=\phi(R\cup R^{c})=\phi(A)=1$ and
$\phi(R)\wedge\phi(R^{c})=(\vee R)\wedge(\vee R^{c})=\vee_{a\in R,b\in R^{c}}(a\wedge b)
=0$, so $\phi(R^{c})=\phi(R)^{c}$. Therefore $\phi$ is a Boolean algebra
homomorphism preserving all least upper bounds, and $\phi$ is injective since
$Ker(\phi)$ is trivial.
\end{proof}

A Boolean partition algebra is a pair $(B,F)$ where $B$ is a Boolean algebra
and $F$ is a (possibly improper) filter on $\mathbb{P}(B)$ where $\{b,b'\}\in F$
for each $b\in B\setminus\{0,1\}$.

\begin{lem}
Let $F\subseteq\mathbb{P}(B)$ be a filter. Then the following are equivalent.

1. $(B,F)$ is a Boolean partition algebra.

2. For each $b\in B\setminus\{0\}$ there is a $P\in F$ with $b\in P$.

3. $F$ contains all partitions of $B$ into finitely many sets.
\end{lem}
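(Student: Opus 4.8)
The plan is to prove the implications cyclically as $1\Rightarrow 3\Rightarrow 2\Rightarrow 1$. Throughout I would use that, by the preceding lemma, $\mathbb{P}(B)$ is a lower semilattice under $\preceq$ with $P\wedge Q=\{p\wedge q\mid p\in P,\,q\in Q,\,p\wedge q\neq 0\}$, so that the filter $F$ is upward closed under $\preceq$ and closed under finite meets, and in particular contains the top partition $\{1\}$.

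For $1\Rightarrow 3$, let $P=\{b_{1},\dots,b_{n}\}$ be a partition of $B$ into finitely many blocks. If $n=1$ then $P=\{1\}\in F$, so assume $n\geq 2$; then each $b_{i}$ is distinct from $0$ by definition of a partition, and, being disjoint from some other nonzero block, also distinct from $1$, so $\{b_{i},b_{i}'\}\in F$ by hypothesis $1$. Since $F$ is closed under finite meets, $Q:=\bigwedge_{i=1}^{n}\{b_{i},b_{i}'\}\in F$, where $Q$ consists of the nonzero meets $\bigwedge_{i=1}^{n}c_{i}$ with $c_{i}\in\{b_{i},b_{i}'\}$. I would then check $Q\preceq P$: given a nonzero element $a=\bigwedge_{i}c_{i}$ of $Q$, since $\vee P=1$ we have $a=\bigvee_{j}(a\wedge b_{j})$, and each $a\wedge b_{j}$ equals $a$ or $0$, so $a\leq b_{j}$ for some $j$. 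Upward closure of $F$ then yields $P\in F$.

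The implication $3\Rightarrow 2$ is immediate: for $b\in B\setminus\{0\}$, take $\{1\}\in F$ if $b=1$, and the finite partition $\{b,b'\}\in F$ otherwise, each of which contains $b$. For $2\Rightarrow 1$, fix $b\in B\setminus\{0,1\}$ and choose by $2$ a partition $P\in F$ with $b\in P$. Every $a\in P$ satisfies $a\leq b$ if $a=b$, and $a\wedge b=0$ (so $a\leq b'$) if $a\neq b$, using that distinct blocks of a partition are disjoint; thus $P\preceq\{b,b'\}$, and upward closure of $F$ gives $\{b,b'\}\in F$, as required.

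I expect the only real work to be in $1\Rightarrow 3$, specifically the verification that the common refinement $Q$ of the two-block partitions $\{b_{i},b_{i}'\}$ refines $P$. This is where the partition condition $\vee P=1$ is genuinely used, and it is the step most easily mishandled if one forgets that an element of $Q$ must sit below some block of $P$ rather than merely below one of the chosen complements $c_{i}$.
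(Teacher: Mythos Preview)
Your proof is correct and follows essentially the same strategy as the paper, the key step in both being that a finite partition $\{b_{1},\dots,b_{n}\}$ is refined by $\bigwedge_{i}\{b_{i},b_{i}'\}$ and hence lies in $F$; you simply run the cycle in the opposite direction ($1\Rightarrow 3\Rightarrow 2\Rightarrow 1$ versus the paper's $1\rightarrow 2\rightarrow 3\rightarrow 1$). Your verification that $Q\preceq P$ is more explicit than the paper's, which simply asserts $\{b_{1},\dots,b_{n}\}\succeq\{b_{1},b_{1}'\}\wedge\cdots\wedge\{b_{n},b_{n}'\}$ without further comment.
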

\begin{proof}
$1\rightarrow 2$ Let $b\in B\setminus\{0\}$. If $b=1$, then $b\in\{b\}\in F$.
If $b\neq 1$, then $\{b,b'\}\in F$.

$2\rightarrow 3$ Let $\{b_{1},...,b_{n}\}$ be a partition of $B$. If $n=1$, then
$\{b_{1}\}\in F$. If $n>1$, then 
$\{b_{1},...,b_{n}\}\succeq \{b_{1},b_{1}'\}\wedge...\wedge\{b_{n},b_{n}'\}\in F$.

$3\rightarrow 1$ This is trivial.
\end{proof}
\begin{exam}
If $B$ is a Boolean algebra, and $\lambda$ is an infinite cardinal,
then define $\mathbb{P}_{\lambda}(B)=\{P\in\mathbb{P}(B):|P|<\lambda\}$.
Then $(B,\mathbb{P}_{\lambda}(B))$ is Boolean partition algebra.
\end{exam}

\begin{thm}
If $B$ is a Boolean algebra, and $F\subseteq\mathbb{P}(B)$ is a filter,
$\{0\}\cup\cup F$ is a subalgebra of $B$ and $(\{0\}\cup\cup F,F)$ is a
Boolean partition algebra.
\end{thm}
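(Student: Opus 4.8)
The plan is to verify the two assertions in turn: first that $S:=\{0\}\cup\bigcup F$ is closed under the Boolean operations, and then that, viewing each member of $F$ as a partition of $S$, the set $F$ is a filter on $\mathbb{P}(S)$ satisfying the defining condition of a Boolean partition algebra.

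For the subalgebra claim I would check membership of $0$ and $1$ together with closure under complementation and meet, after which closure under join follows by De Morgan. Membership of $0$ is by definition, and $1\in\bigcup F$ because $\{1\}$ is the $\preceq$-largest partition of $B$ and hence lies in every filter. For complementation, suppose $a\in\bigcup F$ with $a\in P$ for some $P\in F$. Every block $b\in P$ other than $a$ satisfies $b\wedge a=0$, hence $b\leq a'$, while $a\leq a$; this exhibits $P\preceq\{a,a'\}$, so upward closure of $F$ gives $\{a,a'\}\in F$ and therefore $a'\in\bigcup F$. For meets, take $a\in P$ and $b\in Q$ with $P,Q\in F$. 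Since $F$ is closed under the meet of Lemma 1.3, we have $P\wedge Q\in F$, and by that lemma $P\wedge Q=\{p\wedge q\mid p\in P,q\in Q,p\wedge q\neq 0\}$; thus if $a\wedge b\neq 0$ then $a\wedge b\in\bigcup F$, and otherwise $a\wedge b=0\in S$.

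Next I would observe that $F\subseteq\mathbb{P}(S)$: each $P\in F$ consists of nonzero elements of $S$ with pairwise meet $0$, and since $1$ is the top element its join in $S$ is still $1$, so $P$ is a partition of $S$; conversely any partition of $S$ is a partition of $B$, and because $S$ is a subalgebra the meets $p\wedge q$ and the order $\preceq$ are computed identically in $S$ and in $B$. Consequently the filter structure transports: upward closure and closure under meets of $F$ in $\mathbb{P}(B)$ yield the same in $\mathbb{P}(S)$. Finally, to see that $(S,F)$ is a Boolean partition algebra I would invoke condition $2$ of Lemma 1.7: every $b\in S\setminus\{0\}$ lies in $\bigcup F$ by definition, i.e. $b\in P$ for some $P\in F$, which is exactly what that condition demands.

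The only genuinely delicate point is the identification of partitions of $B$ lying inside $S$ with partitions of $S$, since a priori the suprema defining a partition could change on passing to the subalgebra $S$. This is harmless here because the relevant supremum is always the top element $1$, whose status is unaffected by restricting to $S$; once this is noted the argument is purely a matter of unwinding the definitions and applying the meet formula of Lemma 1.3 together with the characterization of Lemma 1.7.
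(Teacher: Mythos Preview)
Your subalgebra argument is correct and matches the paper's, but there is a genuine gap in the second half. The claim ``any partition of $S$ is a partition of $B$'' is false, and your justification (``the relevant supremum is always the top element $1$, whose status is unaffected by restricting to $S$'') only establishes the easy direction: if $\bigvee_B Q=1$ then certainly $\bigvee_S Q=1$, since any upper bound in $S$ is an upper bound in $B$. The converse can fail. For a concrete counterexample take $B=P(\omega+1)$ and let $F$ be the filter generated by the two-block partitions $\{\{i\},\{i\}^c\}$ for $i<\omega$; then $S=\{0\}\cup\bigcup F$ is the algebra of finite subsets of $\omega$ and their complements in $\omega+1$, and $Q=\{\{i\}:i<\omega\}$ is a partition of $S$ (its only upper bound in $S$ is $1$) but not of $B$ (its join in $B$ is $\omega\neq\omega+1$).

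What you actually need for upward closure is only the weaker statement: if $p\in F$, $q\in\mathbb{P}(S)$ and $p\preceq q$, then $q\in\mathbb{P}(B)$. This is exactly what the paper proves, and the refinement $p\preceq q$ is essential. The argument is that $q$ is cellular in $B$, and if $x\in B$ bounds every $b\in q$ from above then, since each $a\in p$ lies below some $b\in q\leq x$, $x$ also bounds every $a\in p$; but $p$ is a partition of $B$, so $x=1$. Once you replace your general identification $\mathbb{P}(S)\subseteq\mathbb{P}(B)$ by this argument, the rest of your proof goes through.
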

\begin{proof}
Let $a,b\in\{0\}\cup\cup F$. If $a\wedge b=0$, then $a\wedge b\in\{0\}\cup\cup F$.
If $a\wedge b\neq 0$, then $a\neq 0,b\neq 0$, so there are $p,q\in F$
with $a\in p,b\in q$, so $a\wedge b\in p\wedge q$, so $a\wedge b\in\{0\}\cup\cup F$.

Clearly $0\in\{0\}\cup\cup F$ and $1\in\{1\}$, so $1\in\{0\}\cup\cup F$.

If $a\in\{0\}\cup\cup F,a\neq 0,a\neq 1$, then $a\in p$ for some $p\in F$,
so $a'\in\cup F$. Therefore $\{0\}\cup\cup F$ is a subalgebra of $B$.

Clearly $F$ is closed under $\wedge$.
Now assume $p,q\in\mathbb{P}(\{0\}\cup\cup F),p\in F$ and assume $p\preceq q$.
Then we claim that $q$ is a partition of $B$. Clearly $q$ is a cellular family.
If $x\in B$ and $x\geq b$ for each $b\in q$, then for each $a\in p$ we have
a $b\in q$ with $a\leq b\leq x$, so $x=1$. We therefore have $q$ be a partition of $B$,
so $q\in F$. We therefore have $F$ be a filter on $\mathbb{P}(\{0\}\cup\cup F,F)$.
If $a\in(\{0\}\cup\cup F)\setminus\{0\}$, then $a\in p\in F$ for some $p\in F$,
so $(\{0\}\cup\cup F,F)$ is a Boolean partition algebra.
\end{proof}
If $B$ is a Boolean algebra and $F$ is a filter on $\mathbb{P}(B)$, then write
$\mathfrak{B}^{*}(B,F)$ for the Boolean partition algebra $(\{0\}\cup\cup F,F)$.
If $B=P(X)$ for some set $X$, then we shall write $\mathfrak{B}^{*}(X,F)$ for
$\mathfrak{B}^{*}(P(X),F)$.

Given a Boolean partition algebra $(B,F)$, and an ultrafilter $\mathcal{U}\subseteq B$,
then we shall call $\mathcal{U}$ an $F$-ultrafilter if for each $P\in F$ there is an
$a\in P\cap\mathcal{U}$ i.e. $|P\cap\mathcal{U}|=1$ for each $P\in F$.
We shall write $S_{F}(B)$ or $S^{*}(B,F)$ for the collection of all $F$-ultrafilters
on $B$, and we shall write $S^{*}(B)$ for the collection of all ultrafilters on $B$.

\begin{lem}
Let $(B,F)$ be a Boolean partition algebra, and let $x=(x_{p})_{p\in F}\in
^{Lim}_{\leftarrow}F$. Then 

1. If $p,q\in F$, then $x_{p\wedge q}=x_{p}\wedge x_{q}$

2. $b\in\{x_{p}|p\in F\}$ iff $b=1$ or $x_{\{b,b'\}}=b$.

3. $\{x_{p}|p\in F\}$ is an $F$-ultrafilter on $B$.
\end{lem}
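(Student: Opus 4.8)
The plan is to read an element $x=(x_p)_{p\in F}$ of the inverse limit concretely: each $x_p$ is a block of the partition $p$, and the thread condition $\phi_{p,q}(x_p)=x_q$ for $p\preceq q$ says precisely that $x_p\le x_q$, with $x_q$ the unique block of $q$ lying above $x_p$. Writing $\mathcal U=\{x_p\mid p\in F\}$, the whole lemma then reduces to translating coherence of the thread into the filter-theoretic properties of $\mathcal U$, and part (1) is the engine that drives everything else.

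For (1), I would use that $p\wedge q\preceq p$ and $p\wedge q\preceq q$ together with the explicit description of blocks of $p\wedge q$ from the earlier lemma: $x_{p\wedge q}$ is a nonzero block, hence of the form $a\wedge b$ with $a\in p$, $b\in q$. Since $a$ is the unique block of $p$ above $a\wedge b=x_{p\wedge q}$, the thread condition forces $\phi_{p\wedge q,p}(x_{p\wedge q})=a=x_p$, and symmetrically $b=x_q$; hence $x_{p\wedge q}=a\wedge b=x_p\wedge x_q$. The only things to keep straight here are the direction of the transition maps and the fact that $x_{p\wedge q}\ne 0$ (being a block), which is what makes the identification of $a$ and $b$ forced rather than merely possible.

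For (2), the backward direction is immediate: if $b=1$ then $\{1\}$ is the greatest element of $\mathbb{P}(B)$ and so lies in the nonempty filter $F$, giving $x_{\{1\}}=1\in\mathcal U$; and if $x_{\{b,b'\}}=b$ then since $b\in B\setminus\{0,1\}$ we have $\{b,b'\}\in F$ by the defining axiom of a Boolean partition algebra, so again $b\in\mathcal U$. For the forward direction I would take $b=x_p\in\mathcal U$ with $b\ne1$ (so also $b\ne0$, as blocks are nonzero) and apply (1) to $p\wedge\{b,b'\}\in F$: this gives $x_{p\wedge\{b,b'\}}=b\wedge x_{\{b,b'\}}$, which is nonzero, so $x_{\{b,b'\}}$ cannot be the block $b'$ and must equal $b$.

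For (3), I would verify the ultrafilter axioms for $\mathcal U$ one at a time, each reducing to (1) or (2): $1\in\mathcal U$ and $0\notin\mathcal U$ are immediate (blocks are nonzero); closure under $\wedge$ is exactly (1), which also guarantees the meet is nonzero and hence again a block in $\mathcal U$; primeness is (2) applied to the pair $\{b,b'\}\in F$, whose selected block $x_{\{b,b'\}}$ is exactly one of $b,b'$. The step I expect to be the main obstacle is upward closure, since $\mathcal U$ is defined as a set of selected blocks rather than as an up-set; here I would again lean on (2), showing that $a\le c$ with $a\in\mathcal U$ and $c\ne0,1$ forces $x_{\{c,c'\}}=c$ (because $a\wedge c'=0$ rules out the alternative), whence $c\in\mathcal U$. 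Finally, the $F$-ultrafilter condition is read off directly: for $P\in F$ the block $x_P$ lies in $P\cap\mathcal U$, and uniqueness follows from cellularity of $P$ together with closure under $\wedge$ and $0\notin\mathcal U$. Throughout, the recurring care needed is with the degenerate blocks $0$ and $1$ and with ensuring $\{1\}\in F$.
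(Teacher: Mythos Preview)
Your argument is correct and tracks the paper's proof closely: part~(1) is handled identically, and part~(3) uses the same contradiction $x_p\wedge a'=0$ for upward closure together with the same appeals to~(1) and to $\{b,b'\}\in F$ for the remaining ultrafilter axioms. The only cosmetic difference is in the forward direction of~(2): the paper notes directly that $p\preceq\{x_p,x_p'\}$ and reads off $x_{\{x_p,x_p'\}}=\phi_{p,\{x_p,x_p'\}}(x_p)=x_p$ from the thread condition, whereas you route through~(1) via $p\wedge\{b,b'\}$---both work.
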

\begin{proof}
1. We have $x_{p\wedge q}=a\wedge b$ for some $a\in p,b\in q$, so
$a\wedge b=x_{p\wedge q}\leq x_{p}$ and $a\wedge b\leq x_{q}$. If
$a\neq x_{p}$, then $a\wedge b=a\wedge b\wedge x_{p}=0$ a contradiction.
If $b\neq x_{q}$, then $a\wedge b=a\wedge b\wedge x_{q}=0$ a contradiction.
We therefore have $x_{p\wedge q}=a\wedge b=x_{p}\wedge x_{q}$.

2. $\leftarrow$ is trivial. For $\rightarrow$ assume $p\in F$. If
$|p|=1$, then $x_{p}=1$. If $|p|>1$, then let $q=\{x_{p},x_{p}'\}$, the
$p\preceq\{x_{p},x_{p}'\}=q$, so $x_{q}=\phi_{p,q}(x_{p})=x_{p}$.

3. Assume $p\in F$ and $x_{p}\leq a$ and $a\not\in\{x_{p}|p\in F\}$. Then $x_{\{a,a'\}}=a'$,
so $x_{p\wedge\{a,a'\}}=x_{p}\wedge x_{\{a,a'\}}=x_{p}\wedge a'=x_{p}\wedge a\wedge a'=0$
a contradiction. We therefore have $\{x_{p}|p\in F\}$ be an upper set. If
$p,q\in F$, then $x_{p}\wedge x_{q}=x_{p\wedge q}$, so $\{x_{p}|p\in F\}$ is
a filter. If $b\in B\setminus\{0,1\}$, then $x_{\{b,b'\}}=b$ or
$x_{\{b,b'\}}=b'$, so $\{x_{p}|p\in F\}$ is an ultrafilter. $\{x_{p}|p\in F\}$
is an $F$-ultrafilter since $\{x_{p}|p\in F\}\cap p$ is nonempty for each $p\in F$.
\end{proof}

Given $F$-ultrafilter $\mathcal{U}$, let $f:F\rightarrow B$
be the mapping where $f(p)$ is the unique element in $\mathcal{U}\cap p$.
If $p\preceq q$, then $\phi_{p,q}(f(p))\in q$ and
$\phi_{p,q}(f(p))\geq f(p)\in\mathcal{U}$, so $\phi_{p,q}(f(p))=f(q)$.
We therefore have $f\in ^{Lim}_{\leftarrow}\mathcal{U}$.

Define maps $L:^{Lim}_{\leftarrow}F\rightarrow S_{F}(B),
M:S_{F}(B)\rightarrow^{Lim}_{\leftarrow}F$ by letting
$L(x_{p})_{p\in F}=\{x_{p}|p\in F\}$ and where
$M(\mathcal{U})(p)\in p\cap\mathcal{U}$ for $p\in F$.

\begin{thm}
The functions $L$ and $M$ are inverses.
\end{thm}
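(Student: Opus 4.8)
The plan is to verify that the two composites $L\circ M$ and $M\circ L$ are the respective identity maps. Both maps are already well-defined: $L$ lands in $S_{F}(B)$ by the preceding lemma, and the paragraph before the statement shows $M(\mathcal{U})\in {}^{Lim}_{\leftarrow}F$. So the whole content is two verifications, and in each one the decisive fact is the defining property of an $F$-ultrafilter, namely that $|p\cap\mathcal{U}|=1$ for every $p\in F$.

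First I would check $M\circ L=\mathrm{id}$ on ${}^{Lim}_{\leftarrow}F$. Given a thread $x=(x_{p})_{p\in F}$, set $\mathcal{U}=L(x)=\{x_{p}\mid p\in F\}$, which is an $F$-ultrafilter by the lemma. Fix $p\in F$; then $x_{p}\in p$ and $x_{p}\in\mathcal{U}$, so $x_{p}\in p\cap\mathcal{U}$, and since $|p\cap\mathcal{U}|=1$ this forces $M(\mathcal{U})(p)=x_{p}$. As this holds coordinatewise, $M(L(x))=x$.

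Next I would check $L\circ M=\mathrm{id}$ on $S_{F}(B)$. Given an $F$-ultrafilter $\mathcal{U}$, write $f=M(\mathcal{U})$, so that $f(p)$ is the unique element of $p\cap\mathcal{U}$, and $L(f)=\{f(p)\mid p\in F\}$. The inclusion $L(f)\subseteq\mathcal{U}$ is immediate since each $f(p)\in\mathcal{U}$. For the reverse inclusion, take $a\in\mathcal{U}$; then $a\neq0$, so by the equivalence in the earlier lemma (every nonzero element of a Boolean partition algebra lies in some partition of $F$) there is a $p\in F$ with $a\in p$. Hence $a\in p\cap\mathcal{U}$, and uniqueness gives $f(p)=a$, so $a\in L(f)$. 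Thus $L(M(\mathcal{U}))=\mathcal{U}$.

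The only real obstacle is this last reverse inclusion: one must know that every element of $\mathcal{U}$ genuinely occurs as some coordinate of the thread $M(\mathcal{U})$, and this is exactly where the Boolean partition algebra hypothesis is used, via the fact that each nonzero $a\in B$ belongs to a partition in $F$. Everything else falls out of $|p\cap\mathcal{U}|=1$, so no case analysis is required.
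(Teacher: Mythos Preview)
Your proposal is correct and follows essentially the same route as the paper: verify $M\circ L=\mathrm{id}$ coordinatewise using $|p\cap\mathcal{U}|=1$, and for $L\circ M=\mathrm{id}$ pick, for each $a\in\mathcal{U}$, a partition $p\in F$ with $a\in p$ (which is exactly the Boolean partition algebra hypothesis) to get $a=M(\mathcal{U})(p)\in L(M(\mathcal{U}))$. The only cosmetic difference is that the paper concludes $\mathcal{U}=L(M(\mathcal{U}))$ from the single inclusion $\mathcal{U}\subseteq L(M(\mathcal{U}))$ by maximality of ultrafilters, whereas you verify both inclusions directly.
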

\begin{proof}
If $(x_{p})_{p\in F}\in^{Lim}_{\leftarrow}F$, then for $p\in F$ we have
$M(L((x_{p})_{p\in F}))(p)=M(\{x_{p}|p\in F\})(p)=x_{p}$.
Let $\mathcal{U}$ be an $F$-ultrafilter. If $a\in\mathcal{U}$, then let
$p\in F$ be a partition with $a\in p$. Then $M(\mathcal{U})(p)=a$, so
$a\in L(M(\mathcal{U}))$. We therefore have $\mathcal{U}\subseteq L(M(\mathcal{U}))$, so
$\mathcal{U}=L(M(\mathcal{U}))$.
\end{proof}

A Boolean partition algebra $(B,F)$ is said to be stable if for each
$b\in B\setminus\{0\}$, there is an $(x_{p})_{p\in F}\in^{Lim}_{\leftarrow}F$
and a $p\in F$ with $b=x_{p}$.
\begin{thm}
Let $(B,F)$ be a Boolean partition algebra, then the following are equivalent.

1. $(B,F)$ is stable.

2. The projections $\pi_{p}:^{Lim}_{\leftarrow}F\rightarrow p$ are all surjective.

3. $\cup S_{F}(B)=B\setminus\{0\}$

4. $\cap S_{F}(B)=\{1\}$
\end{thm}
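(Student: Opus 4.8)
The plan is to establish the equivalences $1\Leftrightarrow 3$, $1\Leftrightarrow 2$, and $3\Leftrightarrow 4$, which together make all four conditions equivalent. Two facts already proved drive everything, and I would record them first. Since $(B,F)$ is a Boolean partition algebra, the characterization lemma gives that every $b\in B\setminus\{0\}$ lies in some partition $p\in F$; as the elements of partitions are nonzero, this yields $\cup F=B\setminus\{0\}$. Second, by the theorem that $L$ and $M$ are mutually inverse, the $F$-ultrafilters are exactly the sets $\{x_q\mid q\in F\}$ determined by threads $(x_q)_{q\in F}\in{}^{Lim}_{\leftarrow}F$, and for such a thread $x_p$ is the unique element of $\{x_q\mid q\in F\}\cap p$, i.e. $x_p=\pi_p((x_q)_{q\in F})$.

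For $1\Leftrightarrow 3$ I would simply unwind the definitions through this bijection. A nonzero $b$ is of the form $x_p$ for some thread and some $p$ precisely when $b$ belongs to the associated $F$-ultrafilter $\{x_q\mid q\in F\}$; letting the thread range over all of ${}^{Lim}_{\leftarrow}F$ (equivalently, letting the ultrafilter range over all of $S_F(B)$), stability asserts exactly that every nonzero $b$ lies in $\cup S_F(B)$. Since $0$ lies in no ultrafilter we always have $\cup S_F(B)\subseteq B\setminus\{0\}$, so this is condition 3.

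For $1\Leftrightarrow 2$, the direction $2\Rightarrow 1$ is immediate: given nonzero $b$, choose $p\in F$ with $b\in p$ by the first recorded fact, and surjectivity of $\pi_p$ produces a thread with $x_p=b$. The reverse direction $1\Rightarrow 2$ is the one step with real content. Given $p\in F$ and $a\in p$, stability supplies a thread $(x_q)_{q\in F}$ and some $r\in F$ with $x_r=a$; I would then transport $a$ into the $p$-coordinate via the meet-compatibility lemma, namely $x_p\wedge a=x_p\wedge x_r=x_{p\wedge r}$, which is nonzero since it is a member of the partition $p\wedge r$. As $x_p$ and $a$ both lie in the cellular family $p$ and have nonzero meet, they must coincide, so $x_p=a$ and $\pi_p$ is onto. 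This meet-transport argument is where I expect essentially the only difficulty to lie.

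Finally, for $3\Leftrightarrow 4$ I would argue by ultrafilter complementation, quantifier for quantifier. For any $b$ we have $b\in\cap S_F(B)$ iff $b\in\mathcal{U}$ for every $\mathcal{U}\in S_F(B)$ iff $b'\notin\mathcal{U}$ for every such $\mathcal{U}$ iff $b'\notin\cup S_F(B)$. Consequently $\cup S_F(B)=B\setminus\{0\}$ holds iff the only $b$ whose complement avoids $\cup S_F(B)$ is $b=1$, which is precisely $\cap S_F(B)=\{1\}$. Here I would note only the harmless bookkeeping that $1$ belongs to every ultrafilter and that the displayed complementation equivalence stays valid even in the vacuous case $S_F(B)=\emptyset$, so no nonemptiness hypothesis is required.
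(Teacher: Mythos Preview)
Your argument is correct. The one genuine point of departure from the paper is the implication $1\Rightarrow 2$: you prove it directly by the meet-transport step $x_{p}\wedge a=x_{p}\wedge x_{r}=x_{p\wedge r}\neq 0$, forcing $x_{p}=a$ inside the cellular family $p$; the paper instead factors through condition~3, arguing $1\rightarrow 3\rightarrow 2$ and obtaining $\pi_{p}$ surjective simply because any $F$-ultrafilter $\mathcal{U}$ containing $b\in p$ satisfies $M(\mathcal{U})(p)=b$. Your route exercises the identity $x_{p\wedge q}=x_{p}\wedge x_{q}$ from the preceding lemma, while the paper's route needs only the definition of $M$; both are short, and the remaining implications ($1\Leftrightarrow 3$, $2\Rightarrow 1$, $3\Leftrightarrow 4$) are handled the same way in substance, with your treatment of $3\Leftrightarrow 4$ simply spelling out what the paper dismisses as trivial.
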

\begin{proof}
$3\leftrightarrow 4$. This is trivial.

$2\rightarrow 1$ Let's assume that $\pi_{p}$ is surjective. Then for each
$b\in B\setminus\{0\}$, there is a $p\in F$ with $b\in p$ and an
$(x_{p})_{p\in F}\in^{Lim}_{\leftarrow}F$ with $x_{p}=b$.

$3\rightarrow 2$ Let's assume that $p\in F$. Then for each $b\in p$ there is
a $\mathcal{U}\in S_{F}(B)$ with $b\in\mathcal{U}$, so $M(\mathcal{U})(p)=b$, so
$\pi_{p}$ is surjective.

$1\rightarrow 3$ Let's assume $(B,F)$ is stable, then for each $b\in B\setminus\{0\}$
there is an $(x_{p})_{p\in F}\in^{Lim}_{\leftarrow}F$ and a $p\in F$ with $b=x_{p}$.
We therefore have $b\in\{x_{p}|p\in F\}\in S^{*}(B,F)$.
\end{proof}

If $(P,\wedge,0),(Q,\wedge,0)$ are
semilattices and $f:P\rightarrow Q$ is a semilattice homomorphism
and $A\subseteq P\setminus\{0\}$ is a cellular family, then
for each $a,b\in A,a\neq b$ we have $f(a)\wedge f(b)=f(a\wedge b)=f(0)=0$, so
$f''(A)\setminus\{0\}$ is a cellular family. If $(B,F)$ is a Boolean partition
algebra, then write $\iota:(B,F)\rightarrow S^{*}(B,F)$ for the mapping where
$\iota(a)=\{\mathcal{U}\in S^{*}(B,F)|a\in\mathcal{U}\}$. Then $\iota$ is a Boolean
algebra homomorphism. If $p\in F$, then $\iota''(p)\setminus\{\emptyset\}$ is a partition of
$S^{*}(B,F)$ for each $p\in F$. Moreover, $\iota$ is injective iff $Ker(\iota)=0$ iff
$(B,F)$ is stable. Moreover, if $(B,F)$ is stable, then since $\iota$ is injective, for
$p,q\in F,p\neq q$ we have $\iota''(p)\neq\iota''(q)$.

\begin{thm}
If $(B,F)$ is a stable Boolean partition algebra, then for $p,q\in F$ we have
$\iota''(p\wedge q)=\iota''(p)\wedge\iota''(q)$
\end{thm}
\begin{proof}
$\iota''(p\wedge q)=
\iota''(\{a\wedge b|a\in p,b\in q\}\setminus\{0\})
=\iota''(\{a\wedge b|a\in p,b\in q\})\setminus\{\emptyset\}
=\{\iota(a\wedge b)|a\in p,b\in q\}\setminus\{\emptyset\}
=\{\iota(a)\wedge\iota(b)|a\in p,b\in q\}\setminus\{\emptyset\}
=\iota''(a)\wedge\iota''(b)$
\end{proof}

Let $(A,F)$ be a Boolean partition algebra, and let $B$ be a Boolean
algebra. Then a function $f:A\rightarrow B$ is partitional if $f$ is a
Boolean algebra homomorphism, and $f''(p)\setminus\{0\}$ is a partition of
$B$. A partition homomorphism $f:(A,F)\rightarrow(B,G)$ is a Boolean algebra
homomorphism from $A$ to $B$ where $f''(p)\setminus\{0\}\in G$ for each
$p\in F$. A function $f:(A,F)\rightarrow B$ if partitional iff $f:(A,F)\rightarrow(B,\mathbb{P}(B))$
is a partition homomorphism.
If $f:(A,F)\rightarrow B$ is an injective homomorphism, then $f$ is partitional if and
only if $f''(p)$ is a partition of $B$ for each $p\in F$. 
If $f:(A,F)\rightarrow(B,G)$ is an injective homomorphism, then $f$ is a partition homomorphism
iff $f''(p)\in G$ for each $p\in F$.

\begin{thm}
1. Let $f:(A,F)\rightarrow(B,G)$ be a partition homomorphism, and let
$g:(B,G)\rightarrow C$ be partitional. Then $g\circ f:(A,F)\rightarrow C$
is partitional as well.

2. Let $f:(A,F)\rightarrow(B,G),g:(B,G)\rightarrow(C,H)$ be partition
homomorphism, then $g\circ f$ is also a partition homomorphism.
\end{thm}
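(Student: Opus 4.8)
The plan is to reduce both parts to a single identity about set-images, namely that for any $p\in F$ we have $(g\circ f)''(p)\setminus\{0\}=g''(f''(p)\setminus\{0\})\setminus\{0\}$, and then to read off each conclusion directly from the hypothesis on $g$. Before establishing this I would note that $g\circ f$ is automatically a Boolean algebra homomorphism, being a composite of Boolean algebra homomorphisms, so in both parts only the partition condition on images remains to be verified.

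To obtain the image identity, first recall that images compose, so $(g\circ f)''(p)=g''(f''(p))$. The sets $f''(p)$ and $f''(p)\setminus\{0\}$ differ by at most the single element $0$, and since $g$ is a homomorphism we have $g(0)=0$; hence any occurrence of $0$ in $f''(p)$ contributes only $0$ to $g''(f''(p))$, and this contribution is removed when we delete $0$. Thus $g''(f''(p))\setminus\{0\}=g''(f''(p)\setminus\{0\})\setminus\{0\}$, which is exactly the claimed identity.

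With this in hand, fix $p\in F$ and set $q=f''(p)\setminus\{0\}$. In each part $f$ is a partition homomorphism, so $q\in G$. For part 1, $g$ is partitional, so $g''(q)\setminus\{0\}$ is a partition of $C$; by the identity this equals $(g\circ f)''(p)\setminus\{0\}$, whence $g\circ f$ is partitional. For part 2, $g$ is a partition homomorphism, so $g''(q)\setminus\{0\}\in H$; again by the identity this set is $(g\circ f)''(p)\setminus\{0\}$, whence $g\circ f$ is a partition homomorphism into $(C,H)$.

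I expect the only delicate point to be the bookkeeping around the element $0$: one must check that deleting $0$ from $f''(p)$ before applying $g$ gives the same set (after deleting $0$ again) as applying $g$ first, which is precisely what $g(0)=0$ guarantees. Everything else is a direct substitution into the definitions of partitional map and partition homomorphism, so no further machinery is needed.
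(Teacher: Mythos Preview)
Your proof is correct and follows essentially the same approach as the paper: both reduce the question to the identity $(g\circ f)''(p)\setminus\{0\}=g''(f''(p)\setminus\{0\})\setminus\{0\}$, justify it via $g(0)=0$, and then read off each part from the hypothesis on $g$. The paper verifies the identity by an explicit two-inclusion argument whereas you argue more informally that the two sides can differ only in the element $0$, but the content is the same.
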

\begin{proof}
In both case $1$ and $2$, we claim that 
$(g\circ f)''(p)\setminus\{0\}=g''(f''(p)\setminus\{0\})\setminus\{0\}$.
We have $(g\circ f)''(p)\setminus\{0\}=g''(f''(p))\setminus\{0\}\supseteq
g''(f''(p)\setminus\{0\})\setminus\{0\}$. For the reverse inclusion, if
$c\in(g\circ f)''(p)\setminus\{0\}$, then $c=g(b)$ for some $b\in f''(p)$,
but since $c\neq 0$ we have $b\neq 0$ so $c=g(b)\in g''(f''(p)\setminus\{0\})\setminus\{0\}$.

1. We have $f''(p)\setminus\{0\}\in G$, so 
$(g\circ f)''(p)\setminus\{0\}=g''(f''(p)\setminus\{0\})\setminus\{0\}$ is a
partition of $C$.

2. We have $f''(p)\setminus\{0\}\in G$, so 
$(g\circ f)''(p)\setminus\{0\}=g''(f''(p)\setminus\{0\})\setminus\{0\}\in H$, so
$g\circ f$ is a partition homomorphism.
\end{proof}
It can easily be seen that if $1:(B,F)\rightarrow(B,F)$ is the identity mapping,
then $1$ a partition homomorphism. The class of all partition Boolean algebras therefore forms a category.

An extended partition is a family $(a_{i})_{i\in I}\in B^{I}$ such that $\vee_{i\in I}a_{i}=1$
and if $i\neq j$, then $a_{i}\wedge a_{j}=0$. A family $(a_{i})_{i\in I}$ is
an extended partition iff $\{a_{i}|i\in I\}$ is a partition of $B$ and
$a_{i}\neq a_{j}$ whenever $a_{i}\neq 0$.

\begin{thm}
$\label{eoaghigeoa}$

Let $A,B$ be Boolean algebras, then a function $f:A\rightarrow B$ is a
Boolean algebra homomorphism iff whenever $(a,b,c)$ is an extended partition of
$A$, then $(f(a),f(b),f(c))$ is an extended partition of $B$.
\end{thm}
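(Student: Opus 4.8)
The plan is to prove the two directions separately, with essentially all of the work in the converse. The forward direction is immediate from the definition of a homomorphism: if $f$ is a Boolean algebra homomorphism and $(a,b,c)$ is an extended partition of $A$, then $f(a)\vee f(b)\vee f(c)=f(a\vee b\vee c)=f(1)=1$ and $f(a)\wedge f(b)=f(a\wedge b)=f(0)=0$ (and likewise for the other two pairs), so $(f(a),f(b),f(c))$ is an extended partition of $B$.

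For the converse I would assume that $f$ carries extended partitions to extended partitions and build up the homomorphism properties in stages. First I would pin down the constants and the complement. Applying the hypothesis to the extended partition $(1,0,0)$ forces $f(0)\wedge f(0)=0$, hence $f(0)=0$, and then the join condition $f(1)\vee f(0)=1$ gives $f(1)=1$. Next, for any $b\in A$ the triple $(b,b',0)$ is an extended partition, so $(f(b),f(b'),0)$ is one as well; reading off $f(b)\vee f(b')=1$ together with $f(b)\wedge f(b')=0$ shows $f(b')=f(b)'$, i.e. $f$ preserves complements.

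The key intermediate step is monotonicity. Given $a\le b$, the triple $(a,\,b\wedge a',\,b')$ is an extended partition of $A$: its join telescopes via $a\vee(b\wedge a')=b$ to $b\vee b'=1$, and the three pairwise meets all vanish. Hence its image is an extended partition of $B$; in particular $f(a)\wedge f(b')=0$, and since $f(b')=f(b)'$ this reads $f(a)\wedge f(b)'=0$, that is, $f(a)\le f(b)$.

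The main obstacle is that the hypothesis only speaks about \emph{ternary} partitions, whereas I must recover the \emph{binary} join $f(x\vee y)=f(x)\vee f(y)$. The trick is to encode the join in a single well-chosen triple: for arbitrary $x,y$, set $z=x\vee y$ and use the extended partition $(x,\,y\wedge x',\,z')$, whose first two parts join to $z$ (one checks the meets vanish since $x\wedge x'=0$ and $y\wedge x'\le z$). Its image is an extended partition, so $f(x)\vee f(y\wedge x')\vee f(z')=1$; using $f(z')=f(z)'$ this yields $f(z)\le f(x)\vee f(y\wedge x')$, and monotonicity applied to $y\wedge x'\le y$ upgrades it to $f(z)\le f(x)\vee f(y)$. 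The reverse inequality $f(x)\vee f(y)\le f(x\vee y)$ is immediate from monotonicity, so $f$ preserves binary joins. Finally, meets come for free by De Morgan: $f(x\wedge y)=f\bigl((x'\vee y')'\bigr)=\bigl(f(x')\vee f(y')\bigr)'=\bigl(f(x)'\vee f(y)'\bigr)'=f(x)\wedge f(y)$, using complement and join preservation. Together with $f(0)=0$ and $f(1)=1$ this exhibits $f$ as a Boolean algebra homomorphism.
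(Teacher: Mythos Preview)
Your proof is correct and follows essentially the same route as the paper: both arguments pin down $f(0)=0$ and $f(a')=f(a)'$ from the triples $(0,0,1)$ (or $(1,0,0)$) and $(a,a',0)$, then exploit triples of the shape $(a,b,(a\vee b)')$ to recover monotonicity and joins. The only cosmetic difference is ordering---the paper first isolates $f(a\vee b)=f(a)\vee f(b)$ for \emph{incompatible} $a,b$ and derives monotonicity from that, whereas you prove monotonicity directly and then handle arbitrary joins; the underlying triples and the logic are the same.
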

\begin{proof}
$\Rightarrow$ Trivial. 

$\Leftarrow$ First take note that since
$(0,0,1)$ is an extended partition of $A$, we have $(f(0),f(0),f(1))$ be
an extended partition of $B$, so $f(0)=f(0)\wedge f(0)=0$. If
$a\in A$, then $(a,a',0)$ is an extended partition of $A$, so
$(f(a),f(a'),f(0))=(f(a),f(a'),0)$ is an extended partition of $B$, so
$f(a)'=f(a')$. 

Assume $a,b\in A$ are incompatible, then $(a,b,(a\vee b)')$ is an extended partition
of $A$, so $(f(a),f(b),f(a\vee b)')$ is an extended partition of $B$, so
$f(a)\vee f(b)=f(a\vee b)$ and $f(a)\wedge f(b)=0$.

Now assume $a\leq b$, then $f(b)=f((b\wedge a')\vee a)=f(b\wedge a')\vee f(a)$, so
$f(a)\leq f(b)$.

Therefore for arbitrary $a,b\in B$ one has $f(a)\leq f(a\vee b),f(b)\leq f(a\vee b)$, so
$f(a)\vee f(b)\leq f(a\vee b)=f((a\wedge b')\vee b)=f(a\wedge b')\vee f(b)
\leq f(a)\vee f(b)$. Therefore $f$ is a Boolean algebra homomorphism.
\end{proof}
\begin{cor}
Let $(A,F)$ be a Boolean partition algebra, and let $B$ be a Boolean
algebra. Then a function(not necessarily a Boolean algebra homomorphism)
$f:(A,F)\rightarrow B$ is partitional iff $f(0)=0$ and $(f(a))_{a\in p}$
is an extended partition of $B$.
\end{cor}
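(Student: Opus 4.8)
The plan is to prove the two directions separately: the forward implication is essentially a restatement of the definitions, while the reverse implication is where the real work lies, namely in invoking Theorem~\ref{eoaghigeoa} to upgrade the partition-preservation hypothesis to full multiplicativity of $f$.

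For the forward direction I would assume $f$ is partitional, so that $f$ is a Boolean algebra homomorphism and $f''(p)\setminus\{0\}$ is a partition of $B$ for each $p\in F$. Then $f(0)=0$ is immediate. Fixing $p\in F$, for distinct $a,b\in p$ I would use cellularity of $p$ together with multiplicativity to get $f(a)\wedge f(b)=f(a\wedge b)=f(0)=0$, and I would obtain $\vee_{a\in p}f(a)=\vee(f''(p)\setminus\{0\})=1$ from the fact that $f''(p)\setminus\{0\}$ is a partition (adjoining the value $0$ does not change a join). These two facts say exactly that $(f(a))_{a\in p}$ is an extended partition of $B$.

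For the reverse direction I would assume $f(0)=0$ and that $(f(a))_{a\in p}$ is an extended partition of $B$ for every $p\in F$, and aim first to show $f$ is a Boolean algebra homomorphism via Theorem~\ref{eoaghigeoa}. Given an arbitrary extended partition $(a,b,c)$ of $A$, I would set $p=\{a,b,c\}\setminus\{0\}$; this is a partition of $A$ into finitely many blocks, so, since $(A,F)$ is a Boolean partition algebra, $p\in F$. The hypothesis then gives that $(f(x))_{x\in p}$ is an extended partition of $B$, and I would transfer this to the triple $(f(a),f(b),f(c))$: the zero entries among $a,b,c$ contribute $f(0)=0$, distinct nonzero entries lie in $p$ and are therefore sent to pairwise disjoint elements, and $f(a)\vee f(b)\vee f(c)=\vee_{x\in p}f(x)=1$. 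Hence $(f(a),f(b),f(c))$ is an extended partition of $B$, and Theorem~\ref{eoaghigeoa} yields that $f$ is a homomorphism. Since the hypothesis also says directly that $f''(p)\setminus\{0\}$ is a partition of $B$ for each $p\in F$, the map $f$ is partitional.

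The step I expect to be delicate is the bookkeeping in the reverse direction when passing from the family $(f(x))_{x\in p}$ to the ordered triple $(f(a),f(b),f(c))$: one must check that collapsing and reintroducing zero entries, and the possible coincidence of entries among $a,b,c$, breaks neither disjointness nor the join condition. The conceptual crux, however, is recognizing that the Boolean partition algebra axiom forces every finite partition (in particular $\{a,b,c\}\setminus\{0\}$) to belong to $F$, which is precisely what makes the triple hypothesis of Theorem~\ref{eoaghigeoa} available.
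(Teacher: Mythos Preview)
Your proposal is correct and follows essentially the same approach as the paper: the paper's proof is a one-line invocation of Theorem~\ref{eoaghigeoa} for the reverse direction (the forward direction being tacit), and you have simply spelled out why its hypothesis is met, using that every finite partition of $A$ lies in $F$. Your bookkeeping about zero and repeated entries in $(a,b,c)$ is the detail the paper suppresses.
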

\begin{proof}
$f$ satisfies the hypothesis of theorem $\ref{eoaghigeoa}$, so $f$ is
a Boolean algebra homomorphism.
\end{proof}

\section{Uniform Spaces and Duality}
Given a set $X$, $\mathbb{P}(P(X))$ is the lattice of partitions on $X$.
We shall write $\mathbb{P}P(X)$ for $\mathbb{P}(P(X))$.
A uniform space $(X,F)$ is said to be non-Archimedean if it is generated
by equivalence relations.

A partition space is a pair $(X,M)$ where $M$ is a filter on the lattice
$\mathbb{P}P(X)$. We shall call the elements of $M$ crevasses.
Partition spaces are essentially non-Archimedean uniform spaces,
but in many circumstances partition spaces are easier to work with than
non-Archimedean uniform spaces. We shall require every complete uniform space
and complete partition space to be separating.

\begin{thm}
A separating partition space $(X,M)$ is complete iff whenever
$\phi\in ^{Lim}_{\leftarrow}M$, then there is an $x\in X$
with $x\in\phi(P)$ for each $P\in M$.
\end{thm}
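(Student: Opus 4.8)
The plan is to translate the abstract completeness of the uniform space underlying $(X,M)$ into the concrete language of threads of the inverse system ${}^{Lim}_{\leftarrow}M$. To each crevasse $P\in M$ I associate the equivalence relation $E_{P}=\{(x,y)\in X\times X\mid x,y\text{ lie in a common block of }P\}$; the sets $E_{P}$ form a base of entourages for the non-Archimedean uniformity that $(X,M)$ determines, and the identity $E_{P}\cap E_{Q}=E_{P\wedge Q}$ reflects that $M$ is closed under $\wedge$. The $E_{P}$-small subsets of $X$ are exactly those contained in a single block of $P$, so a filter $\mathcal{F}$ on $X$ is Cauchy precisely when for every $P\in M$ some block of $P$ belongs to $\mathcal{F}$; since distinct blocks are disjoint this block is unique, and I will write $\phi_{\mathcal{F}}(P)$ for it.

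First I would check that the assignment $P\mapsto\phi_{\mathcal{F}}(P)$ is a thread. If $P\preceq Q$ then $\phi_{\mathcal{F}}(P)\in\mathcal{F}$ is contained in a unique block of $Q$, which must also lie in $\mathcal{F}$ by upward closure and hence equal $\phi_{\mathcal{F}}(Q)$; thus $\phi_{P,Q}(\phi_{\mathcal{F}}(P))=\phi_{\mathcal{F}}(Q)$ and $\phi_{\mathcal{F}}\in{}^{Lim}_{\leftarrow}M$. Conversely, any thread $\phi$ gives a decreasing filter base $\{\phi(P)\mid P\in M\}$ (decreasing because $\phi(P\wedge Q)\subseteq\phi(P)\cap\phi(Q)$) generating a Cauchy filter whose associated thread is $\phi$ again. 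The key observation about convergence is that the neighbourhood filter of a point $x$ has the blocks $E_{P}[x]$ (the block of $P$ containing $x$) as a base, so a Cauchy filter $\mathcal{F}$ converges to $x$ exactly when $E_{P}[x]\in\mathcal{F}$ for all $P$, i.e.\ when $E_{P}[x]=\phi_{\mathcal{F}}(P)$, i.e.\ when $x\in\phi_{\mathcal{F}}(P)$ for every $P\in M$.

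With these correspondences in hand the theorem falls out. For the forward direction, assume $(X,M)$ is complete and let $\phi\in{}^{Lim}_{\leftarrow}M$; the Cauchy filter it generates converges to some $x\in X$, and by the convergence criterion $x\in\phi(P)$ for all $P$. For the converse, assume every thread admits such a point and let $\mathcal{F}$ be an arbitrary Cauchy filter; applying the hypothesis to $\phi_{\mathcal{F}}$ produces an $x$ with $x\in\phi_{\mathcal{F}}(P)$ for all $P$, whence $\mathcal{F}\to x$ and the space is complete. The point demanding the most care is the convergence criterion: I must verify that convergence of a Cauchy filter depends \emph{only} on its thread and not on the filter itself, which rests on the non-Archimedean fact that the blocks form a neighbourhood base together with the observation that a filter cannot contain two disjoint sets. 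The separating hypothesis plays no role in the existence of limits but guarantees their uniqueness, consistent with the standing convention that completeness is considered only for separating spaces.
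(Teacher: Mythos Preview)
Your proposal is correct and follows essentially the same route as the paper: in both directions you pass between Cauchy filters and threads in ${}^{Lim}_{\leftarrow}M$, using that blocks of crevasses form a neighbourhood base to translate convergence into the condition $x\in\phi(P)$ for all $P$. Your write-up is more explicit about the entourage structure and the convergence criterion, but the underlying argument is identical to the paper's.
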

\begin{proof}
$\rightarrow$ Let's assume $(X,M)$ is complete. Then let $\phi\in Lim_{\leftarrow}M$.
Then $\{\phi(R)|R\in M\}$ is an ultrafilter on  $\emptyset\cup\cup M$, so
$\{\phi(R)|R\in M\}$ is a filterbase on $X$, and $\{\phi(R)|R\in M\}$ is clearly
Cauchy. Since $(X,M)$ is complete, $\{\phi(R)|R\in M\}$ converges to some
$x\in X$, so for each $P\in M$ we have $x\in\phi(P)$.

$\leftarrow$ Let $F$ be a Cauchy filter. Then for each $P\in M$, 
there is a unique $R\in P$ with $R\in F$. Let $\phi:M\rightarrow \mathfrak{B}^{*}(X,M)$
be the function with $\phi(P)\in P,\phi(P)\in F$ for each $P\in M$. If
$P\preceq Q$, then $\phi_{P,Q}(\phi(P))\in P$, and
$\phi_{P,Q}(\phi(P))\supseteq\phi(P)\in F$, so $\phi(Q)=\phi_{P,Q}(\phi(P))$.
Therefore $\phi\in Lim_{\leftarrow}M$, so there is an $x\in X$ with $x\in\phi(P)$ for
each $P\in M$. Therefore for each neighborhood $U$ of $x$, there is a $P\in M$ and
an $R\in P$ with $x\in R\subseteq U$, but we must have $R=\phi(P)\in F$, so
$U\in F$ as well. We therefore conclude that $F\rightarrow x$.
\end{proof}

\begin{thm}
Let $(B,F)$ be a stable Boolean partition algebra. Then 

1. $\{\iota''(p)|p\in F\})$ generates a partition space structure on $S^{*}(B,F)$.

2. If $(B,F)$ is subcomplete, then $(S^{*}(B,F),\{\iota''(p)|p\in F\}))$ is
a partition space when $S^{*}(B,F)$ is nonempty.
\end{thm}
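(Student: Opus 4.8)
The plan is to treat the two parts separately, with Part~1 amounting to checking that $\{\iota''(p)\mid p\in F\}$ is a filter base on the lower semilattice $\mathbb{P}P(S^{*}(B,F))$, and Part~2 upgrading this family to a genuine filter.

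For Part~1, I would first recall the two consequences of stability established above: $\iota$ is injective, and for each $p\in F$ the family $\iota''(p)$ is a partition of $S^{*}(B,F)$ with no empty blocks to discard, since every nonzero $a\in p$ lies in some $F$-ultrafilter and hence $\iota(a)\neq\emptyset$. Thus $\{\iota''(p)\mid p\in F\}$ is a nonempty collection of partitions. To see it is a filter base I would invoke the theorem that $\iota''(p\wedge q)=\iota''(p)\wedge\iota''(q)$ for $p,q\in F$: since $F$ is closed under $\wedge$, the image family is closed under the meet of $\mathbb{P}P(S^{*}(B,F))$ and hence downward directed. A nonempty, meet-closed subset of a lower semilattice is a filter base, and its upward closure is the desired partition space structure.

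Part~2 requires showing that when each $p\in F$ is subcomplete the family $\{\iota''(p)\mid p\in F\}$ is already upward closed, so that it is itself a filter rather than merely a base. I would take a partition $Q$ of $S^{*}(B,F)$ with $\iota''(p)\preceq Q$ for some $p=\{a_{i}\mid i\in I\}\in F$. Coarseness means each block $S\in Q$ is the union of the blocks $\iota(a_{i})$ over some index set $I_{S}\subseteq I$, and these $I_{S}$ partition $I$. Using subcompleteness of $p$, set $b_{S}=\vee\{a_{i}\mid i\in I_{S}\}$, which exists in $B$. The crux is to verify $\iota(b_{S})=S$. One inclusion is immediate from $a_{i}\leq b_{S}$ and the upward closure of ultrafilters; for the reverse I would apply the theorem on subcomplete partitions, which shows the map $R\mapsto\vee R$ on $P(p)$ is a Boolean homomorphism and hence preserves complements, so that $b_{S}'=\vee\{a_{i}\mid i\notin I_{S}\}$. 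Then any $F$-ultrafilter containing $b_{S}$ cannot contain its unique distinguished block $a_{i}\in p$ with $i\notin I_{S}$, forcing that block to have index in $I_{S}$ and the ultrafilter to lie in $S$.

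With $\iota(b_{S})=S$ in hand, I would check that $q=\{b_{S}\mid S\in Q\}$ is a partition of $B$: $\vee q=\vee p=1$ by reassociating the suprema, while $b_{S}\wedge b_{S'}=0$ for $S\neq S'$ follows from injectivity of $\iota$ together with $\iota(b_{S}\wedge b_{S'})=S\cap S'=\emptyset$. Since $a_{i}\leq b_{S}$ for the block $S$ containing $\iota(a_{i})$, we have $p\preceq q$, and because $F$ is a filter, upward closed under $\preceq$, this gives $q\in F$ with $\iota''(q)=Q$. Hence the family is upward closed; combined with the meet-closure and nonemptiness from Part~1, and with $S^{*}(B,F)$ nonempty to exclude the degenerate empty space, it is a filter, so $(S^{*}(B,F),\{\iota''(p)\mid p\in F\})$ is a partition space. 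I expect the reverse inclusion $\iota(b_{S})\supseteq S$ to be the main obstacle, precisely because a Boolean homomorphism need not preserve infinite suprema in general; subcompleteness is exactly the hypothesis that repairs this through the complement identity $b_{S}'=\vee(p\setminus\{a_{i}\mid i\in I_{S}\})$.
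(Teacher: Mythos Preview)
Your proposal is correct and follows the same strategy as the paper: for Part~1 use $\iota''(p\wedge q)=\iota''(p)\wedge\iota''(q)$ to get a filter base, and for Part~2 lift any coarsening $Q\succeq\iota''(p)$ to $q=\{\vee_{i\in I_S}a_i\mid S\in Q\}\in F$ with $\iota''(q)=Q$. The only cosmetic difference is that you establish both inclusions of $\iota(b_S)=S$ directly via the complement identity $b_S'=\vee\{a_i\mid i\notin I_S\}$, whereas the paper checks only $S\subseteq\iota(\vee P_S)$ and then observes that since $Z$ and $\iota''(r)$ are both partitions with each block of one contained in a block of the other, they must coincide.
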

\begin{proof}
1. $\{\iota''(p)|p\in F\})$ is a filterbase since $\iota''(p)\wedge\iota''(q)=\iota''(p\wedge q).$

2. Let's assume that $(B,F)$ is subcomplete. Then assume $p\in F$ and let
$Z$ be a partition of $S^{*}(B,F)$ with $\iota''(p)\preceq Z$.

For each $R\in Z$, let $P_{R}=\{a\in p|\iota(a)\subseteq R\}$. Then let
$r=\{\vee P_{R}|R\in Z\}$. Then $r$ is a partition of $B$ refining $p$, so $r$ must
be subcomplete as well. If $\mathcal{U}\in R$, then $\mathcal{U}\in\iota(a)$ for some
$a\in p$ with $\iota(a)\subseteq R$, so $a\in\mathcal{U}$,
so $\vee P_{R}\in\mathcal{U}$, so $\mathcal{U}\in\iota(\vee P_{R})$.
We therefore have $R\subseteq\iota(\vee P_{R})$, but since
$\iota''(r)=\{\iota(\vee P_{R})|R\in Z\}$ and $Z=\{R|R\in R\}$ are both partitions,
we must have $\iota''(r)=Z\succeq p$. We therefore have $(S^{*}(B,F),\{\iota''(p)|p\in F\})$ be
a partition space.
\end{proof}

If $(B,F)$ is a Boolean partition algebra, then let
$\psi:(B,F)\rightarrow \mathfrak{B}^{*}(S^{*}(B,F))$ be the mapping given by
$\psi(x)=\iota(x)$. Given a partition space $(X,M)$, and
$x\in X$, let $\mathcal{C}(x)=\{R\in \mathfrak{B}^{*}(X,M)|x\in R\}$, then
$\mathcal{C}(x)$ is an ultrafilter on $\mathfrak{B}^{*}(X,M)$, and for each
$P\in M$ there is a unique $R\in P$ with $x\in R$, so $R\in\mathcal{C}(x)$.
We therefore have $\mathcal{C}(x)\in S^{*}(\mathfrak{B}^{*}(X,M))$ for each
$x\in X$.

\begin{thm}
1. Let $(B,F)$ be a stable Boolean partition algebra, then $S^{*}(B,F)$ is a complete
partition space.

2. If $(X,M)$ is a partition space, then $\mathfrak{B}^{*}(X,M)$ is a subcomplete and
stable Boolean partition algebra.

3. Let $(B,F)$ be a stable Boolean partition algebra, then
$\psi:(B,F)\rightarrow \mathfrak{B}^{*}(S^{*}(B,F))$ is an injective partition homomorphism, and
if $(B,F)$ also subcomplete, then $\psi$ is a partition isomorphism.

4. Let $(X,M)$ be a partition space, then $\mathcal{C}:(X,M)\rightarrow S^{*}(\mathfrak{B}^{*}(X,M))$
is uniformly continuous, and $\mathcal{C}''(X,M)$ is dense in $S^{*}(\mathfrak{B}^{*}(X,M))$.
If $(X,M)$ is separated, then $\mathcal{C}$ is a uniform embedding. If $(X,M)$ is complete,
then $\mathcal{C}$ is a uniform homeomorphism.
\end{thm}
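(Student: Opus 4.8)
The plan is to run all four parts through the machinery already assembled: the bijection $L,M$ between ${}^{Lim}_{\leftarrow}F$ and $S_{F}(B)$, the completeness criterion for partition spaces, the fact that $\iota$ is injective exactly when $(B,F)$ is stable, and the previous theorem that $\{\iota''(p)\mid p\in F\}$ generates a partition space (and exhausts it when $(B,F)$ is subcomplete). Throughout, the crevasse filter $M$ on $S^{*}(B,F)$ is the one generated by the $\iota''(p)$, and the algebra underlying $\mathfrak{B}^{*}(X,M)$ is the subalgebra $\{0\}\cup\cup M$ of $P(X)$ whose join is union of blocks. For part $1$, I would first note $S^{*}(B,F)$ is separating, since distinct $F$-ultrafilters disagree on some $b\in B\setminus\{0,1\}$ and $\iota''(\{b,b'\})\in M$ then separates them. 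For completeness I apply the criterion: given $\phi\in{}^{Lim}_{\leftarrow}M$, restricting to the generators gives $\phi(\iota''(p))=\iota(x_{p})$, the coherence of $\phi$ transfers to coherence of $(x_{p})_{p\in F}\in{}^{Lim}_{\leftarrow}F$, and by the inverse-limit lemma $\mathcal{U}=L((x_{p}))$ is an $F$-ultrafilter lying in every $\phi(\iota''(p))$; since an arbitrary $P\in M$ refines some $\iota''(p)$, coherence forces $\phi(P)\supseteq\phi(\iota''(p))\ni\mathcal{U}$, meeting the criterion.

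For part $2$, subcompleteness follows because for $p\in M$ and $r\subseteq p$ the two-block partition $\{\cup r,\cup(p\setminus r)\}$ is coarser than $p$, hence lies in $M$, so $\cup r=\vee r\in\{0\}\cup\cup M$; and stability follows by taking, for nonempty $b\in\cup M$, a point $x\in b$, so that $b\in\mathcal{C}(x)$ exhibits $b\in\cup S_{M}(\mathfrak{B}^{*}(X,M))$, using that the $\mathcal{C}(x)$ are already known to be $M$-ultrafilters. For part $3$, the map $\psi=\iota$ is a Boolean homomorphism landing in $\{0\}\cup\cup M$ (each nonzero $b$ lies in some $p\in F$, so $\iota(b)\in\iota''(p)$), with $\psi''(p)=\iota''(p)\in M$, hence a partition homomorphism; injectivity is precisely stability. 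The isomorphism claim is where I expect the main effort: when $(B,F)$ is subcomplete the previous theorem guarantees every partition of $S^{*}(B,F)$ refining some $\iota''(p)$ is itself $\iota''(r)$ for some $r\in F$, so $M=\{\iota''(r)\mid r\in F\}$ and $\cup M=\iota''(\cup F)=\iota''(B\setminus\{0\})$; this yields surjectivity of $\psi$, and injectivity of $\iota$ on partitions makes $\psi^{-1}$ a partition homomorphism, so $\psi$ is a partition isomorphism.

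For part $4$, the governing computation is $\mathcal{C}_{-1}(\iota(R))=R$, since $\mathcal{C}(x)\in\iota(R)\iff R\in\mathcal{C}(x)\iff x\in R$. Thus the preimage of the crevasse $\iota''(P)$ is $P\in M$, giving uniform continuity; each $P\in M$ arises this way, giving the uniform embedding once injectivity is noted (separation forces $\mathcal{C}(x)\neq\mathcal{C}(y)$ for $x\neq y$); and density holds because every nonempty block $\iota(R)$ contains $\mathcal{C}(x)$ for any $x\in R$. For the completeness case I would prove $\mathcal{C}$ onto: an $M$-ultrafilter $\mathcal{V}$ selects a coherent block $R_{P}\in P$ for each $P\in M$, i.e. an element of ${}^{Lim}_{\leftarrow}M$, and completeness of $(X,M)$ via the criterion produces $x\in\bigcap_{P\in M}R_{P}$, whence $\mathcal{C}(x)=\mathcal{V}$; a uniform embedding that is onto is a uniform homeomorphism.

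The two hard steps are the surjectivity of $\psi$ in part $3$, where subcompleteness is indispensable because every crevasse of the dual space must be shown to descend to an honest partition of $B$ (supplied by the previous theorem), and the surjectivity of $\mathcal{C}$ in part $4$, where one must correctly identify an abstract $M$-ultrafilter with a coherent family of blocks before invoking completeness. Everything else reduces to the preimage identity $\mathcal{C}_{-1}(\iota(R))=R$ together with the already-established behavior of $\iota$, $L$, and $M$.
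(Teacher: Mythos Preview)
Your proposal is correct and follows essentially the same route as the paper: separation via $\iota''(\{b,b'\})$, completeness by pulling $\phi$ back through $\iota$ to an element of ${}^{Lim}_{\leftarrow}F$, subcompleteness via the two-block coarsening $\{\cup r,\cup(p\setminus r)\}$, stability via a point-induced $M$-ultrafilter, the isomorphism in part 3 from the previous theorem's identification $M=\{\iota''(p)\mid p\in F\}$ under subcompleteness, and part 4 entirely driven by the identity $\mathcal{C}_{-1}(\iota(R))=R$. The only cosmetic differences are that the paper phrases stability in part 2 via the inverse-limit element $\varphi$ rather than $\mathcal{C}(x)$, and in part 4 argues surjectivity by calling $\mathcal{U}$ a Cauchy filter and invoking convergence directly rather than the equivalent inverse-limit criterion; also note that in your part 1 the clause ``$P$ refines some $\iota''(p)$'' should read ``$P$ is refined by some $\iota''(p)$'' to match the containment $\phi(\iota''(p))\subseteq\phi(P)$ you (correctly) use.
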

\begin{proof}
1. 
To show that $S^{*}(B,F)$ is separated, assume $\mathcal{U},\mathcal{V}\in S^{*}(B,F)$ are distinct ultrafilters.
Then let $a\in \mathcal{U}\setminus\mathcal{V}$. Then $\{a,a'\}\in F$, but 
since $a\in\mathcal{U},a'\in\mathcal{V}$ we have $\mathcal{U}\in\iota(a),\mathcal{V}\in\iota(a')$,
so $\mathcal{U},\mathcal{V}$ are in distinct blocks of the partition
$\{\iota(a),\iota(a')\}$.

Let $M$ be the partition structure generated by $\{\iota''(p)|p\in F\}$,
and let $\varphi\in^{Lim}_{\leftarrow}M$. Then for $p\in F$ we have
$\iota''(p)\in M$, and $\varphi(\iota''(p))\in\iota''(p)$, so let $x_{p}=\iota^{-1}(\varphi(\iota''(p)))$.
Then $x_{p}\in p$ for $p\in F$. Moreover, if $p\preceq q$, then $\iota''(p)\preceq\iota''(q)$, so
$\varphi(\iota''(p))\subseteq\varphi(\iota''(q))$, so
$x_{p}=\iota^{-1}(\varphi(\iota''(p)))\leq\iota^{-1}(\varphi(\iota''(q)))=x_{q}$, so
$\varphi_{p,q}(x_{p})=x_{q}$. We therefore have $(x_{p})_{p\in F}\in^{Lim}_{\leftarrow}F$.

Let $\mathcal{U}=\{x_{p}|p\in F\}$. Then $\mathcal{U}\in S^{*}(B,F)$.
Given $P\in M$ there is a $p\in F$ with $\iota''(p)\preceq P$, so
since $x_{p}\in\mathcal{U}$ we have
$\mathcal{U}\in\iota(x_{p})=\varphi(\iota''(p))\subseteq\varphi(P)$.
We therefore have $S^{*}(B,F)$ be complete.

2. If $P\in M$ and $Z\subset P$ is non-empty,
then $P\preceq\{\cup Z,\cup(P\setminus Z)\}$, so $\cup Z\in M$, so
$\mathfrak{B}^{*}(X,M)$ is subcomplete.

To prove stability, assume $P\in M$. Then for each $R\in P$, let
$x\in R$, then let $\varphi:M\rightarrow\cup M$ be the function where
$x\in \varphi(Q)\in Q$ for each $Q\in M$. Then $\varphi\in^{Lim}_{\leftarrow}M$
and $\varphi(P)=R$. We therefore have the projection map $\pi_{P}:^{Lim}_{\leftarrow}M\rightarrow P$
be surjective. We therefore have $\mathfrak{B}^{*}(X,M)$ be stable.

3. $\psi$ is injective since $Ker\psi=Ker\iota=\{0\}$.
Let $M$ be the partition structure on $S^{*}(B,F)$. Then for $p\in F$ we have
$\psi''(p)=\iota''(p)\in M$ for each $p\in F$, so $\psi$ is a partition homomorphism.

If $(B,F)$ is subcomplete, then $M=\{\iota''(p)|p\in F\}=\{\psi''(p)|p\in F\}$ and
$B^{*}(S^{*}(B,F))=B^{*}(S^{*}(B,F),M)=(\{\emptyset\}\cup\cup M,M)$, but
$\{\emptyset\}\cup\cup M=\{\iota(b)|b\in B\}=\psi''(B)$, so $\psi$
is a partition isomorphism.

4. Take note that $\mathfrak{B}^{*}(X,M)=(\emptyset\cup\cup M,M)$, so
$S^{*}(\mathfrak{B}^{*}(X,M))=S^{*}(\emptyset\cup\cup M,M)=(S^{*}(\emptyset\cup\cup M,M),
\{\iota''(P)|P\in M\})$ since $\mathfrak{B}^{*}(X,M)$ is subcomplete. To show $\mathcal{C}$
is uniformly continuous and dense we shall take inverse images of 
the partitions $\iota''(P)$ under $\mathcal{C}$. We have
$\{\mathcal{C}_{-1}(R)|R\in\iota''(P)\}=\{\mathcal{C}_{-1}(\iota(V))|V\in P\}$.
Now $x\in\mathcal{C}_{-1}(\iota(V))$ iff $\mathcal{C}(x)\in\iota(V)$ iff
$V\in\mathcal{C}(x)$ iff $x\in V$, so $\mathcal{C}_{-1}(\iota(V))=V$, so
$\{\mathcal{C}_{-1}(R)|R\in\iota''(P)\}=\{\mathcal{C}_{-1}(\iota(V))|V\in P\}
=\{V|V\in P\}=P$. We therefore have $\mathcal{C}$ be uniformly continuous, and since
$\emptyset\not\in\{\mathcal{C}_{-1}(R)|R\in\iota''(P)\}$ for each partition
$\iota''(P)$, we have $\mathcal{C}''(X,M)\subseteq S^{*}(\mathfrak{B}^{*}(X,M))$ be dense.

If $(X,M)$ is separated, then one can clearly see that the function $\mathcal{C}$ is
injective, so since each $P\in M$ can be written as $\{\mathcal{C}_{-1}(R)|R\in\iota''(P)\}$
we have $\mathcal{C}$ be an embedding. If $(X,M)$ is complete, then
each $\mathcal{U}\in S^{*}(\mathfrak{B}^{*}(X,M))=S^{*}(\emptyset\cup\cup M,M)$, so $\mathcal{U}$
is an ultrafilter with $|\mathcal{U}\cap P|=1$ for each $P\in M$, so
$\mathcal{U}$ is Cauchy, so $\mathcal{U}\rightarrow x$ for some $x\in X$.
We therefore have $\mathcal{U}\subseteq\mathcal{C}(x)$, so $\mathcal{U}=\mathcal{C}(x)$.
We therefore have $\mathcal{C}$ be surjective, so since $\mathcal{C}$
is a uniform embedding we have $\mathcal{C}$ be a uniform homeomorphism.
\end{proof}

If $(X,M)(Y,N)$ are uniform spaces, then a mapping $f:X\rightarrow Y$ is
uniformly continuous iff $\{f_{-1}(R)|R\in Q\}\setminus\{\emptyset\}\in M$
for each $Q\in N$. If $f:X\rightarrow Y$ is uniformly continuous, then
define a mapping $\mathfrak{B}^{*}(f):\mathfrak{B}^{*}(Y,N)\rightarrow \mathfrak{B}^{*}(X,M)$ by letting
$\mathfrak{B}^{*}(f)(R)=f_{-1}(R)$. Then clearly $\mathfrak{B}^{*}(f)$ is a partition homomorphism.

\begin{thm}
If $(A,F),(B,G)$ are Boolean partition spaces, and $\phi:A\rightarrow B$
is a partition homomorphism, then for each $\mathcal{U}\in S^{*}(B,F)$
we have $\phi_{-1}(\mathcal{U})\in S^{*}(A,F)$
\end{thm}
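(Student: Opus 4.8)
The plan is to split the verification into two independent parts: first that $\phi_{-1}(\mathcal{U})$ is an ordinary ultrafilter on $A$, and then that it selects exactly one block from each partition in $F$. Throughout I read the statement with the evident correction that $\mathcal{U}$ is a $G$-ultrafilter on $B$, i.e. $\mathcal{U}\in S^{*}(B,G)$, and the conclusion is that $\phi_{-1}(\mathcal{U})\in S^{*}(A,F)$.

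For the first part I would use only that $\phi$ is a Boolean algebra homomorphism. Since $\phi(1)=1\in\mathcal{U}$ and $\phi(0)=0\notin\mathcal{U}$, the set $\phi_{-1}(\mathcal{U})$ contains $1$ and omits $0$. Upward closure follows because $a\leq b$ gives $\phi(a)\leq\phi(b)$ and $\mathcal{U}$ is an upper set; closure under $\wedge$ follows from $\phi(a\wedge b)=\phi(a)\wedge\phi(b)$ together with the same property for $\mathcal{U}$; and for any $a\in A$ the ultrafilter $\mathcal{U}$ contains either $\phi(a)$ or $\phi(a)'=\phi(a')$, so $\phi_{-1}(\mathcal{U})$ contains $a$ or $a'$. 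Hence $\phi_{-1}(\mathcal{U})$ is an ultrafilter on $A$.

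For the second part, fix $p\in F$. Here I would invoke the defining property of a partition homomorphism, namely $\phi''(p)\setminus\{0\}\in G$. Because $\mathcal{U}$ is a $G$-ultrafilter, there is a unique block $b\in(\phi''(p)\setminus\{0\})\cap\mathcal{U}$; write $b=\phi(a)$ for some $a\in p$. Then $\phi(a)=b\in\mathcal{U}$, so $a\in p\cap\phi_{-1}(\mathcal{U})$, giving existence. Uniqueness is automatic: if $a_{1},a_{2}\in p$ both lie in $\phi_{-1}(\mathcal{U})$ with $a_{1}\neq a_{2}$, then $a_{1}\wedge a_{2}=0$ since $p$ is cellular, yet $a_{1}\wedge a_{2}\in\phi_{-1}(\mathcal{U})$ because the latter is a filter, forcing $0\in\phi_{-1}(\mathcal{U})$, a contradiction. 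Thus $|p\cap\phi_{-1}(\mathcal{U})|=1$ for every $p\in F$, so $\phi_{-1}(\mathcal{U})$ is an $F$-ultrafilter.

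The argument is essentially routine, and the only place where the hypotheses are genuinely used is the selection step: the requirement $\phi''(p)\setminus\{0\}\in G$ is exactly what lets the $G$-ultrafilter $\mathcal{U}$ produce a block, which then pulls back into $p$. A purely Boolean homomorphism would not suffice. So the main thing to get right is matching the partition-homomorphism condition on the $A$-side with the $G$-ultrafilter condition on the $B$-side; everything else is the standard preimage-of-an-ultrafilter computation.
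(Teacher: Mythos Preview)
Your argument is correct and follows exactly the paper's route: the paper's proof is a terse version of your second part, taking $p\in F$, using $\phi''(p)\setminus\{0\}\in G$ to find $a\in p$ with $\phi(a)\in\mathcal{U}$, and implicitly leaving the standard ultrafilter-preimage verification and the uniqueness as routine. You have simply filled in the details the paper omits.
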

\begin{proof}
Let's assume that $p\in F$. Then $\phi''(p)\setminus\{0\}\in G$, so
there is an $a\in p$ where $\phi(a)\in\mathcal{U}$, so $a\in\phi_{-1}(\mathcal{U})$.
\end{proof}
For each pair of Boolean partition spaces $(A,F),(B,G)$ and partition
homomorphism $\phi:(A,F)\rightarrow (B,G)$ define $S^{*}(\phi)$ by letting
$S^{*}(\phi)(\mathcal{U})=\phi_{-1}(\mathcal{U})$.

\begin{thm}
If $(A,F),(B,G)$ are stable Boolean partition algebras, and $\phi:(A,F)\rightarrow(B,G)$
is a partition homomorphism, then $S^{*}(\phi)$ is uniformly continuous.
\end{thm}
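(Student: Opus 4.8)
The plan is to verify the criterion for uniform continuity stated earlier in the section: writing $g = S^{*}(\phi)\colon S^{*}(B,G)\to S^{*}(A,F)$, I must show that for every crevasse $Q$ on $S^{*}(A,F)$ the pullback $\{g_{-1}(R)\mid R\in Q\}\setminus\{\emptyset\}$ is a crevasse on $S^{*}(B,G)$. Since the partition structure on $S^{*}(A,F)$ is generated by the filterbase $\{\iota_{A}''(p)\mid p\in F\}$ (here $\iota_{A}$ and $\iota_{B}$ denote the canonical embeddings of $A$ and $B$ into their spaces of ultrafilters), I would first reduce to the basic crevasses $\iota_{A}''(p)$. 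Indeed, any $Q$ in the generated filter satisfies $\iota_{A}''(p)\preceq Q$ for some $p\in F$, and pulling back blockwise preserves refinement, so $g_{-1}(\iota_{A}''(p))\preceq g_{-1}(Q)$; as the source filter is upward closed under $\preceq$, membership of the basic pullback forces membership of $g_{-1}(Q)$.

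The crux is the identity $g_{-1}(\iota_{A}(a))=\iota_{B}(\phi(a))$ for each $a\in A$, which I would obtain by unwinding definitions: $\mathcal{U}\in g_{-1}(\iota_{A}(a))$ iff $\phi_{-1}(\mathcal{U})\in\iota_{A}(a)$ iff $a\in\phi_{-1}(\mathcal{U})$ iff $\phi(a)\in\mathcal{U}$ iff $\mathcal{U}\in\iota_{B}(\phi(a))$. This single equivalence is the only place where the dual map $S^{*}(\phi)$ must be reconciled with $\phi$ itself; everything before and after it is bookkeeping, so I expect it to be the one step demanding care.

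With the identity in hand, the basic pullback becomes $\{\iota_{B}(\phi(a))\mid a\in p\}\setminus\{\emptyset\}$. Stability of $(B,G)$ gives $\iota_{B}(b)=\emptyset$ exactly when $b=0$, so discarding empty blocks amounts to discarding those $a\in p$ with $\phi(a)=0$; moreover distinct blocks of $p$ are disjoint, hence their nonzero images under the homomorphism $\phi$ are disjoint and so distinct. The pullback therefore equals $\iota_{B}''(\phi''(p)\setminus\{0\})$. Finally, because $\phi$ is a partition homomorphism, $\phi''(p)\setminus\{0\}\in G$, so this is one of the generating crevasses $\iota_{B}''(q)$ of $S^{*}(B,G)$ and in particular lies in the source filter. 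Combined with the base reduction of the first paragraph, this yields uniform continuity of $S^{*}(\phi)$. The only subtlety worth watching is the correct handling of the empty blocks, which is exactly what stability of $(B,G)$ is there to control.
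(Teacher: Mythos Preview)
Your proof is correct and follows essentially the same route as the paper's: reduce to the generating crevasses $\iota_{A}''(p)$, establish the key identity $S^{*}(\phi)_{-1}(\iota_{A}(a))=\iota_{B}(\phi(a))$ by unwinding definitions, and conclude that the pullback is $\iota_{B}''(\phi''(p)\setminus\{0\})$, which lies in the target filter because $\phi$ is a partition homomorphism. If anything, you are slightly more explicit than the paper in spelling out the base-reduction step and in invoking stability of $(B,G)$ to identify the empty blocks with the zero images.
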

\begin{proof}
Let $P$ be a crevasse in $S^{*}(A,F)$. Then there is a $p\in F$ with $\iota''(p)\preceq P$.
We shall take the inverse image of $\iota''(p)$ under $S^{*}(\phi)$.
We have $\{S^{*}(\phi)_{-1}(R)|R\in\iota''(p)\}=\{S^{*}(\phi)_{-1}(\iota(r))|r\in p\}$.
For $\mathcal{U}\in S^{*}(B,G)$ we have $\mathcal{U}\in S^{*}(\phi)_{-1}(\iota(r))$ iff
$S^{*}(\phi)(\mathcal{U})\in\iota(r)$ iff $r\in S^{*}(\phi)(\mathcal{U})$ iff $\phi(r)\in\mathcal{U}$
iff $\mathcal{U}\in\iota(\phi(r))$, so $S^{*}(\phi)_{-1}(\iota(r))=\iota(\phi(r))$.
We therefore have $\{S^{*}(\phi)_{-1}(R)|R\in\iota''(p)\}=\{S^{*}(\phi)_{-1}(\iota(r))|r\in p\}=
\{\iota(\phi(r))|r\in p\}=\{\iota(s)|s\in\phi''(p)\}$. Therefore
$\{S^{*}(\phi)_{-1}(R)|R\in\iota''(p)\}\setminus\{\emptyset\}
=\{\iota(s)|s\in\phi''(p)\}\setminus\{\emptyset\}=
\{\iota(s)|s\in\phi''(p)\setminus\{0\}\}=\iota''(\phi''(p)\setminus\{0\})$ be a crevasse
in $S^{*}(B,G)$. We therefore have $S^{*}(\phi)$ be uniformly continuous.
\end{proof}
If $(X,L),(Y,M),(Z,N)$ are uniform spaces, and $f:X\rightarrow Y,g:Y\rightarrow Z$ are
uniformly continuous continuous, then for $R\in\mathfrak{B}^{*}(Z,M)$ we have
$\mathfrak{B}^{*}(g\circ f)(R)=(g\circ f)_{-1}(R)=f_{-1}(g_{-1}(R))=
\mathfrak{B}^{*}(f)\circ\mathfrak{B}^{*}(g)(R)$. Furthermore, if $(A,F),(B,G),(C,H)$
are Boolean partition spaces, and $f:(A,F)\rightarrow(B,G),g:(B,G)\rightarrow(C,H)$ are
partition homomorphisms, then $S^{*}(g\circ f)=(g\circ f)_{-1}(\mathcal{U})=
f_{-1}\circ g_{-1}(\mathcal{U})=S^{*}(f)\circ S^{*}(g)(\mathcal{U}).$ Therefore
$\mathfrak{B}^{*},S^{*}$ are contravariant functors since $\mathfrak{B}^{*},S^{*}$ clearly
map identity functions onto identity functions.

\begin{thm}
1. Let $(X,M),(Y,N)$ be uniform spaces, and let $f:X\rightarrow Y$ be uniformly continuous,
then $S^{*}(\mathfrak{B}^{*}(f))\circ \mathcal{C}_{(X,M)}=C_{(Y,N)}\circ f$.

2. Let $(A,F),(B,G)$ be a stable Boolean  algebras, and let $f:A\rightarrow B$ be a
partition homomorphism, then $\mathfrak{B}^{*}(S^{*}(f))\psi_{(A,F)}=\psi_{(B,G)}f$.

3. If $(X,M)$ is a uniform space, then the functions
$\mathfrak{B}^{*}(\mathcal{C}_{X}):\mathfrak{B}^{*}(S^{*}(\mathfrak{B}^{*}(X,M)))\rightarrow \mathfrak{B}^{*}(X,M)$ and
$\psi:\mathfrak{B}^{*}(X,M)\rightarrow \mathfrak{B}^{*}(S^{*}(\mathfrak{B}^{*}(X,M)))$ are inverses.

4. If $(B,F)$ is a stable Boolean partition algebra, then
$S^{*}(\psi_{B}):S^{*}(\mathfrak{B}^{*}(S^{*}(B,F)))\rightarrow S^{*}(B,F)$ and
$\mathcal{C}:S^{*}(B,F)\rightarrow S^{*}(\mathfrak{B}^{*}(S^{*}(B,F)))$ are inverses.
\end{thm}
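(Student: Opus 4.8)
The plan is to read the final theorem as the four standard ingredients of a duality: parts~1 and~2 are the naturality squares for the two transformations $\mathcal{C}$ and $\psi$, while parts~3 and~4 are the two triangle (zig-zag) identities. I would handle the naturality squares by unwinding the definitions of the functors on morphisms and chasing ultrafilter memberships, and handle the triangle identities by first checking that one composite is the identity and then upgrading to a genuine two-sided inverse using the isomorphism and homeomorphism results already proved.

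For part~1, I would apply $S^{*}(\mathfrak{B}^{*}(f))$ to $\mathcal{C}_{(X,M)}(x)$. Since $\mathfrak{B}^{*}(f)(R)=f_{-1}(R)$ and $S^{*}$ sends an ultrafilter to its preimage, one computes $S^{*}(\mathfrak{B}^{*}(f))(\mathcal{C}(x))=\{R\mid f_{-1}(R)\in\mathcal{C}(x)\}=\{R\mid x\in f_{-1}(R)\}=\{R\mid f(x)\in R\}=\mathcal{C}_{(Y,N)}(f(x))$. Part~2 is the mirror-image chase on the algebra side: $\mathfrak{B}^{*}(S^{*}(f))(\psi_{(A,F)}(a))=S^{*}(f)_{-1}(\iota(a))=\{\mathcal{U}\mid f(a)\in\mathcal{U}\}=\iota(f(a))=\psi_{(B,G)}(f(a))$. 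Neither step requires anything beyond keeping track of which functor reverses arrows.

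For the triangle identities I would first get one composite for free. In part~3 the key input is the identity $\mathcal{C}_{-1}(\iota(V))=V$ established in the proof of the preceding theorem; applying $\mathfrak{B}^{*}(\mathcal{C}_{X})$ to $\psi(R)=\iota(R)$ gives $\mathcal{C}_{-1}(\iota(R))=R$, so $\mathfrak{B}^{*}(\mathcal{C}_{X})\circ\psi$ is the identity on $\mathfrak{B}^{*}(X,M)$. In part~4 I would chase $S^{*}(\psi_{B})(\mathcal{C}(\mathcal{U}))=\{b\mid\iota(b)\in\mathcal{C}(\mathcal{U})\}=\{b\mid\mathcal{U}\in\iota(b)\}=\{b\mid b\in\mathcal{U}\}=\mathcal{U}$, so $S^{*}(\psi_{B})\circ\mathcal{C}_{S^{*}(B,F)}$ is the identity on $S^{*}(B,F)$. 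To promote these one-sided inverses to two-sided ones, I would invoke the earlier theorem: $\mathfrak{B}^{*}(X,M)$ is stable and subcomplete, hence $\psi_{\mathfrak{B}^{*}(X,M)}$ is a partition isomorphism and in particular a bijection, and a bijection with a one-sided inverse has it as a genuine inverse; similarly, stability of $(B,F)$ makes $S^{*}(B,F)$ complete, so $\mathcal{C}_{S^{*}(B,F)}$ is a uniform homeomorphism and hence bijective, and the same argument closes part~4.

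I do not expect a real obstacle here: all the mathematical weight already resides in the previously established facts that $\psi$ is an isomorphism on subcomplete stable algebras and that $\mathcal{C}$ is a homeomorphism on complete spaces. The only points demanding care are bookkeeping ones, namely tracking the contravariance so that each composite is typed correctly, and invoking stability, subcompleteness, and completeness at exactly the places where bijectivity is needed.
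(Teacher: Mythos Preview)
Your proposal is correct and follows essentially the same route as the paper: parts~1 and~2 are handled by the identical ultrafilter/element chase, and parts~3 and~4 are handled by verifying that $\mathfrak{B}^{*}(\mathcal{C}_{X})\circ\psi$ and $S^{*}(\psi_{B})\circ\mathcal{C}$ are the respective identities. The only difference is that the paper stops after the one-sided identity and leaves the upgrade to a two-sided inverse implicit, whereas you spell out explicitly that bijectivity of $\psi$ (from subcompleteness and stability of $\mathfrak{B}^{*}(X,M)$) and of $\mathcal{C}$ (from completeness of $S^{*}(B,F)$) supplies the other composite; this is a welcome clarification rather than a different argument.
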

\begin{proof}
1. For $x\in X$ we have
$S^{*}(\mathfrak{B}^{*}(f))C_{(X,M)}(x)=\mathfrak{B}^{*}(f)_{-1}(\{R\in B^{*}(X,M)|x\in R\})
=\{S\in B^{*}(Y,N)|x\in\mathfrak{B}^{*}(f)(S)\}=\{S\in\mathfrak{B}^{*}(Y,N)|x\in f_{-1}(S)\}
=\{S\in\mathfrak{B}^{*}(Y,N)|f(x)\in S\}=\mathcal{C}_{(Y,M)}\circ f(x)$.

2. $\mathfrak{B}^{*}(S^{*}(f))\psi_{A}(a)=
\mathfrak{B}^{*}(S^{*}(f))(\{\mathcal{U}\in S^{*}(A,F)|a\in\mathcal{U}\})=
(S^{*}(f)_{-1}(\{\mathcal{U}\in S^{*}(A,F)|a\in\mathcal{U}\})
=\{\mathcal{V}\in S^{*}(B,G)|a\in S^{*}(f)(\mathcal{V})\}=
\{\mathcal{V}\in S^{*}(B,G)|a\in f_{-1}(\mathcal{V})\}=
\{\mathcal{V}\in S^{*}(B,G)|f(a)\in\mathcal{V}\}=\psi_{B}\circ f(a).$

3. We shall show that $\mathcal{B}^{*}(\mathcal{C})\psi:\mathfrak{B}^{*}(X,M)\rightarrow
\mathcal{B}^{*}(X,M)$ is the identity function. If $R\in\mathfrak{B}^{*}(X,M)$,
then $x\in\mathfrak{B}^{*}(\mathcal{C})\psi(R)=\mathcal{C}_{-1}\psi(R)$ iff
$\mathcal{C}(x)\in\psi(R)$ iff $R\in\mathcal{C}(x)$ iff $x\in R$, so
$\mathfrak{B}^{*}(\mathcal{C})\psi(R)=R$, so $\mathfrak{B}^{*}(\mathcal{C})$ is
the identity function.

4. We shall show that $S^{*}(\psi)\mathcal{C}:S^{*}(B,F)\rightarrow S^{*}(B,F)$ is
the identity function. Let $\mathcal{U}\in S^{*}(B,F)$. Then $a\in S^{*}(\psi)\mathcal{C}(\mathcal{U})$
iff $\psi(a)\in\mathcal{C}(\mathcal{U})$ iff $\mathcal{U}\in\psi(a)$ iff $a\in\mathcal{U}$.
We therefore have $S^{*}(\psi)\mathcal{C}$ be the identity function.
\end{proof}
It should be noted that every compact space has a unique uniform structure.
More specifically, if $X$ is compact, then let $F$ be the filter on $X\times X$
where $R\in F$ if $R$ is a neighborhood of the diagonal. Then $F$
is the unique uniformity on $X$. Furthermore, $(X,\mathcal{U})$ is a uniform space,
then $X$ is compact iff $(X,\mathcal{U})$ is complete and totally bounded.
The duality between compact totally disconnected spaces and Boolean algebras
follows as a consequence of these facts.

\bibliographystyle{amsplain}
\bibliography{}
\end{document}